\newtheorem{theorem}{Theorem}
\newtheorem{claim}[theorem]{Claim}
\newtheorem{corollary}[theorem]{Corollary}
\newtheorem{definition}[theorem]{Definition}
\newtheorem{lemma}[theorem]{Lemma}
\newtheorem{remark}[theorem]{Remark}
\newtheorem{proposition}[theorem]{Proposition}
\title{Entropy and the growth rate of universal covering trees}
\author[1]{Idan Eisner}
\author[2]{Shlomo Hoory}
\date{October 2024}
\affil[1]{Department of Computer Science, Tel-Hai University of the Galilee, Kiryat Shmona, Israel\footnote{eisnerida@telhai.ac.il}}
\affil[2]{Department of Computer Science, Tel-Hai University of the Galilee, Kiryat Shmona, Israel\footnote{hooryshl@telhai.ac.il}}
\newcommand{\dirE}{\vec{E}}
\newcommand{\R}{\mathbb{R}}  
\newcommand{\N}{\mathbb{N}} 
\newcommand{\E}{\mathbb{E}} 
\DeclareMathOperator{\var}{var}
\DeclareMathOperator{\outdeg}{outdeg}
\DeclareMathOperator{\indeg}{indeg}
\DeclareMathOperator{\mindeg}{mindeg}
\DeclareMathOperator{\maxdeg}{maxdeg}
\DeclareMathOperator{\dom}{dom}
\begin{document}

\maketitle

\begin{abstract}
    This work studies the relation between two graph parameters, $\rho$ and $\Lambda$.
    For an undirected graph $G$,  $\rho(G)$ is the growth rate of its universal covering tree, 
    while $\Lambda(G)$ is a weighted geometric average of the vertex degree minus one, 
    corresponding to the rate of entropy growth for the non-backtracking random walk (NBRW).    
    It is well known that $\rho(G) \geq \Lambda(G)$ for all graphs, and that 
    graphs with $\rho=\Lambda$ 
    exhibit some special properties. In this work we derive an easy to check,
    necessary and sufficient condition for the equality to hold. 
    Furthermore, we show that the variance of the number of random bits used by a length 
    $\ell$ NBRW is $O(1)$ if $\rho = \Lambda$ and $\Omega(\ell)$ if $\rho > \Lambda$.
    As a consequence we exhibit infinitely many 
    non-trivial examples of graphs with $\rho = \Lambda$.
\end{abstract}

\section{Introduction}

\subsection{The Main Players}
    This paper investigates the relation between two graph parameters $\Lambda$ and $\rho$. 
    Given the graph $G=(V,E)$, which will always be connected, undirected, with minimal degree at least two, and maximal degree at least three, 
    we define $\Lambda(G)$ and $\rho(G)$ as follows:
    \begin{itemize}
    \item 
        $\Lambda$ is the geometric mean of the vertex degrees minus one, where each vertex has weight proportional to its degree:
        \begin{eqnarray*}
            \Lambda(G) = \prod_{v \in V} (\deg(v)-1)^\frac{\deg(v)}{2 |E|}.
        \end{eqnarray*}
    \item 
        $\rho$  is the growth rate of the universal cover of $G$, exact definition can be found at subsection~\ref{subsection:universal}.
    \end{itemize}
    It is well know that $\rho(G) \geq \Lambda(G)$, see~\cite{alon2002moore,hoory2024girth} or subsection~\ref{subsection:Lambda} here.
    In this work we give a simple, necessary and sufficient condition for equality to hold.
    However, before stating the exact result, we begin with some background and motivation. 
    
\subsection{Some Motivation}
    The non-backtracking random walk (NBRW) on the graph $G$ is a process that explores a graph by moving from vertex to vertex, 
    where in 
    each step, the walk moves to a randomly chosen neighbor of the current vertex, excluding the one it just visited. 
    It was shown by Alon et. al~\cite{alon2007non} that this extra bias towards exploration makes the walk mix faster on regular expander graphs, compared to the simple random walk (SRW) that does not employ this extra constraint. 
    Also, it was shown that the SRW as well as the NBRW exhibit a sharp threshold in their convergence to the stationary distribution for regular Ramanujan graphs~\cite{lubetzky2016cutoff}, as well as for various random graph models with high probability.
    Lubetzky et al.~\cite{lubetzky2010cutoff}, proved such a result for random $d$-regular graphs, ${\cal G}(n,d)$,
    Ben-Hamou et al.~\cite{ben2017cutoff}, for random graphs with a prescribed degree distribution in the configuration model,
    and Conchon-Kerjan~\cite{conchon2022cutoff}  for random lifts of a fixed based graph.

    The case of a random $n$-lift of some base graph $G$ is closely related to this work, as the mixing time of the NBRW is $t_s = (1+o(1)) \log_{\Lambda}n$ with high probability~\cite{conchon2022cutoff}, while the diameter of a random $n$-lift is $t_d = (1+o(1)) \log_{\rho}n$ with high probability~\cite{conchon2021sparse}. 
    Put differently, performing the NBRW on a random $n$-lift of $G$, the first time when all vertices have positive probability of being reached is $t_d$, 
    while the first time when the walk gets close to the stationary distribution is $t_s$.
    Clearly $t_s \geq t_d$.
    Therefore, asking if $\rho(G) = \Lambda(G)$ is the same as asking if the two events occur at the same time, up to a $1+o(1)$ factor. 

    A second motivation for the $\rho$ vs. $\Lambda$ question is the Moore bound for irregular graphs.
    In \cite{alon2002moore}, Alon et. al proved that any girth $g$ graph 
    with the same degree distribution as $G$ has at least $\Omega(\Lambda(G)^{g/2})$ vertices,
    while in \cite{hoory2024girth} Hoory proved that any girth $g$ graph covering the base graph $G$ has at least $\Omega(\rho(G)^{g/2})$ vertices.
    Therefore, when $\rho(G) = \Lambda(G)$ the bounds coincide, and otherwise there is an exponential gap between the two bounds.

\subsection{Our Results}\label{subsec:results}

    For the graph $G$, define a \emph{suspended path} as a non-backtracking path $P$ in $G$ 
    that has no internal vertices of degree more than two, and where its two endpoints $u,v$ have degree greater than two.   
    The main result of this paper is that a necessary and sufficient condition for $G$ to have $\Lambda = \rho$, 
    is that the following equation holds for all suspended paths $P$ in $G$, where $|P|$ denotes the number of edges in $P$:
    \begin{eqnarray}\label{eq:suspended_path_condition_intro}
        (\deg(u)-1) \cdot (\deg(v)-1) = \Lambda(G)^{2|P|}.
    \end{eqnarray}    

    Graph satisfying $\rho=\Lambda$ include regular graphs and bipartite-biregular graphs.
    In addition, any such graph, where all edges are replaced by length $k$ paths for some fixed $k>1$ also satisfies the condition.
    It turns out that the simple combinatorial characterization given in~\eqref{eq:suspended_path_condition_intro}, 
    can be used to construct infinitely many graphs with $\rho=\Lambda$ that are not in the above list, such as those in Figure~\ref{fig:lambda_rho_counterexample}.
    \begin{figure}[h]
        \centering
        \includegraphics[width=1.0\textwidth]{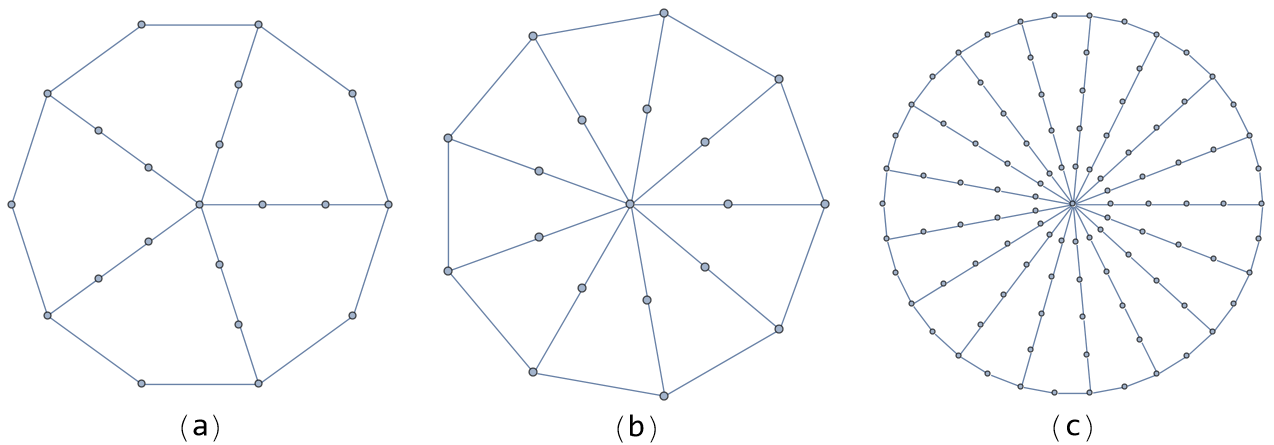}
        \caption{
            Three graphs with $\rho = \Lambda$ 
            that are not regular, bipartite bi-regular,
            or obtained from such graphs by 
            replacing each edge by a length $k$ path.
           It can be verified that the suspended path condition holds with $\Lambda$ equals $\sqrt{2}$, $2$ and $\sqrt{2}$ for the graphs (a), (b) and (c), respectively.
        }
        \label{fig:lambda_rho_counterexample}
    \end{figure}

    The third result concerns the random variable $R_\ell$ counting the number of random bits consumed by a length $\ell$ NBRW starting from the stationary distribution.
    Namely, the sum of $\log_2(\deg(v)-1)$ on the vertices visited by the walk.
    It is not hard to verify that the expected value of $R_\ell$ is $\ell \log_2 \Lambda$. 
    We prove that the following dichotomy on the variance of $R_\ell$ holds:
    If $\rho(G)=\Lambda(G)$ then $\var[R_\ell]$ is bounded by a constant independent of $\ell$,
    but if $\rho(G)>\Lambda(G)$ then the variance grows linearly with $\ell$.

    The rest of the paper is organized as follows:
    In Section~\ref{sect:preliminaries} we give the required background for the main body of this work.
    In Section~\ref{sect:main_theorem} we prove the main theorem, characterising the graphs for which $\Lambda = \rho$.
    In Section~\ref{sect:graphs_with_rho_equal_lambda} we exhibit a family of graphs with $\Lambda = \rho$ that do not fall into the set of known examples. 
    In Section~\ref{sect:variance} we prove that the variance of the number of random bits used follows a dichotomy,     and we demonstrate the result on the $K_4$ minus an edge graph.
    We end with a set of open questions.
\section{Preliminaries}\label{sect:preliminaries}
\subsection{The Non-Backtracking Random Walk and Adjacency Matrix}\label{subsection:nbrw}
Given a simple undirected graph $G=(V,E)$, we regard each edge as a pair of directed edges and denote the set of directed edges by $\dirE$.
For a directed edge $e=(u,v) \in \dirE$, we denote its head by $h(e) = v$ and tail by $t(e) = u$.
We define $\outdeg(e) = \deg(h(e))-1$, $\indeg(e) = \deg(t(e))-1$, and the reversed edge $e' = (u,v)' = (v,u)$. 
\begin{definition}
    Let $G=(V,E)$ be a simple undirected graph with minimal degree at least two.  
    For two directed edges $e,f$, we say that one can transition from $e$ to $f$ if $h(e)=t(f)$ and $e' \neq f$ and denote this relation by $e \rightarrow f$.
    The Non-Backtracking Random-Walk (NBRW) on $G$ is a Markov chain on the state space $\dirE$ with the transition matrix $\Pi_G$ defined by:
    \begin{equation*}
        \pi_G(e,f) = \begin{cases}
             \frac{1}{\outdeg(e)}, & \mbox{ if }e \rightarrow f\\
             0,                    & \mbox{otherwise}
        \end{cases}
    \end{equation*}
    We regard the state $e=(u,v)$ as being at $v$ coming from $u$. 
    Let $B$ denote the non-backtracking adjacency matrix, which is an $\dirE$ by $\dirE$ 0-1 matrix where $B_{e,f}=1$ if $e \rightarrow f$.
\end{definition}

\begin{remark}
    The definition generalizes to allow for multiple edges and self loops in $G$, where a self loop can be either a whole-loop or a half-loop. 
    Each edge, except for the half-loops, gives rise to two edges in $\dirE$ each being the inverse of the other.  
    A half-loop in $G$ gives rise to a single edge in $\dirE$, which is its own inverse.
\end{remark}

\begin{claim}
    For any graph $G=(V,E)$ with minimal degree at least two, 
    the uniform edge distribution $\nu_s$ defined by $\nu_s(e) = 1/|\dirE|$ for all $e \in \dirE$ is stationary for the Markov chain $\Pi_G$.
\end{claim}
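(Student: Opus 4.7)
The plan is to verify the stationarity condition $\nu_s \Pi_G = \nu_s$ directly by computing, for an arbitrary directed edge $f \in \dirE$, the sum
\begin{equation*}
    (\nu_s \Pi_G)(f) = \sum_{e \in \dirE} \nu_s(e)\, \pi_G(e,f) = \frac{1}{|\dirE|} \sum_{e\,:\, e \to f} \frac{1}{\outdeg(e)},
\end{equation*}
and showing that the inner sum equals $1$. Thus the core of the proof reduces to a local counting argument at the tail of $f$.

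Write $f = (u,v)$, so $t(f) = u$. By definition, the edges $e$ with $e \to f$ are precisely those directed edges with head $u$ except the reverse $f' = (v,u)$. Since $G$ has minimum degree at least two, there are exactly $\deg(u)$ directed edges pointing into $u$, one for each neighbor, so excluding $f'$ leaves $\deg(u) - 1$ candidates for $e$. Crucially, for every such $e$ the head is $u$, so $\outdeg(e) = \deg(h(e)) - 1 = \deg(u) - 1$ is a constant over the sum.

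Therefore the inner sum collapses to
\begin{equation*}
    \sum_{e\,:\, e \to f} \frac{1}{\outdeg(e)} = (\deg(u) - 1) \cdot \frac{1}{\deg(u) - 1} = 1,
\end{equation*}
which yields $(\nu_s \Pi_G)(f) = 1/|\dirE| = \nu_s(f)$ for every $f$, as required. There is no real obstacle here; the only point worth handling carefully is the edge case of a half-loop at $u$, which the remark just above permits: such a loop contributes a single directed edge (its own inverse), but the counting of in-edges at $u$ and the value of $\outdeg$ remain consistent, so the argument goes through verbatim. The same care also covers multi-edges, since each parallel copy is treated as a distinct element of $\dirE$.
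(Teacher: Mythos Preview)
Your proof is correct and follows exactly the same approach as the paper's: both verify $(\nu_s \Pi_G)(f) = \nu_s(f)$ by computing $\sum_{e:e\to f} 1/\outdeg(e) = 1$. You simply spell out the reason for that equality (the $\deg(u)-1$ predecessors each contribute $1/(\deg(u)-1)$), which the paper leaves implicit.
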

\begin{proof}
    \begin{eqnarray*}
        (\nu_s \Pi_G)_f 
            &=& \sum_{e \in \dirE} \nu_s(e) \pi_G(e,f) = \frac{1}{|\dirE|} \sum_{e \in \dirE} \pi_G(e,f)\\
            &=& \frac{1}{|\dirE|} \sum_{e: e \rightarrow f} \frac{1}{\outdeg(e)} = \frac{1}{|\dirE|} = \nu_s(f).
    \end{eqnarray*}
\end{proof}

\noindent
The next claim is known, but we include a proof for completeness sake.

\begin{claim}[\cite{glover2021non} proposition 3.3]\label{claim:glover2021non}
    The NBRW Markov chain on $G$ as well as the non-backtracking adjacency matrix $B$ are irreducible iff $G$ is connected with $\mindeg(G) \geq 2$ and $\maxdeg(G) > 2$.
\end{claim}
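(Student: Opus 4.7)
For the forward direction I would argue by contrapositive that each of the three conditions is necessary. If $G$ is disconnected, directed edges lying in different components of $G$ cannot communicate under $B$. If some vertex $v$ has $\deg(v)=1$ with unique neighbor $u$, then the directed edge $(u,v)$ has no valid non-backtracking successor, so the row of $B$ indexed by $(u,v)$ is identically zero and $B$ cannot be irreducible. If $\mindeg(G)\geq 2$ but $\maxdeg(G)\leq 2$, then $G$ is 2-regular and connected, hence a single cycle $C_n$; the NBRW is then deterministic and the $2n$ directed edges split into two orientation classes forming disjoint communicating classes.

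For the backward direction, assume $G$ is connected with $\mindeg(G)\geq 2$ and $\maxdeg(G)\geq 3$. The core technical step I would isolate is a key lemma: for every $e\in\dirE$ there exists a non-backtracking walk from $e$ to its reverse $e'$. Granting this lemma, irreducibility would follow from two short observations. First, if $e\to f$ is a valid one-step transition in $B$, then also $f\to^* e$: writing $e=(u,v)$ and $f=(v,w)$ with $w\neq u$, the key lemma gives $f\to^* f'=(w,v)$; the transition $(w,v)\to(v,u)=e'$ is valid since $w\neq u$; and the key lemma again yields $e'\to^* e$. Hence adjacent edges in $B$ are always strongly connected. Second, the undirected graph $H$ on vertex set $\dirE$ with $e\sim f$ iff $e\to f$ or $f\to e$ in $B$ is connected: from $(u,v)$ one may move in $H$ to any $(v,w)$ with $w\neq u$ or to any $(x,u)$ with $x\neq v$; at a vertex of degree $\geq 3$ these rotations connect all directed edges locally incident there in both orientations, and along a suspended path of degree-2 vertices the corresponding directed edges form a path in $H$. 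Combined with the connectivity of $G$ and the existence of a degree-$\geq 3$ vertex, $H$ is connected, and then the first observation implies that all of $\dirE$ forms a single strongly connected component of $B$.

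The main obstacle is the key lemma. For $e=(u,v)$ the goal is to reach $e'=(v,u)$, equivalently to arrive at $v$ from some neighbor $x\neq u$ and then take the transition to $u$. My plan is to walk non-backtrackingly from $e$ until reaching a vertex $w$ with $\deg(w)\geq 3$ (reachable by connectivity of $G$ and the hypothesis $\maxdeg(G)\geq 3$), exploit the extra branching at $w$ to reverse the effective direction of travel in a controlled way, and then route back toward $v$ arriving via some neighbor $x\neq u$, after which the single transition to $(v,u)$ completes the walk. The delicate case is when $\deg(v)=2$, since then the only candidate $x$ is the unique other neighbor of $v$, forcing us to produce a non-backtracking cycle through $v$ passing through both of its neighbors; this is guaranteed by $\mindeg(G)\geq 2$ (which forces cycles in the relevant part of the graph) combined with the presence of the degree-$\geq 3$ vertex (which prevents $G$ itself from being a cycle and therefore rules out a purely one-orientation structure near $v$).
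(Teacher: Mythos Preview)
Your forward direction is correct and essentially matches the paper's. The backward direction is where the routes diverge.

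You make ``every $e\in\dirE$ admits a non-backtracking walk to $e'$'' the key lemma, and then derive irreducibility via the symmetrization step $e\to f\Rightarrow f\to^* e$ together with a connectivity argument for the auxiliary undirected graph $H$. Those two deductions are fine, but the key lemma itself carries essentially the full weight of the problem: you correctly flag the ``delicate case'' $\deg(v)=2$ and then simply assert it is ``guaranteed,'' yet routing a non-backtracking walk back to $v$ through a prescribed neighbor is exactly the crux of irreducibility. So the difficulty has been relocated rather than reduced, and the sketch you give for the lemma does not close it.

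The paper sidesteps this by weakening the lemma to an existential statement: it only needs \emph{one} edge $h$ with $h\to^* h'$, and constructs one explicitly. Since $\mindeg(G)\geq 2$ there is a non-backtracking cycle $C=(e_1,\ldots,e_\ell)$; since $G$ is connected with $\maxdeg(G)>2$, $C$ can be taken through a vertex $v$ with $\deg(v)\geq 3$, arranged so that $t(e_1)=h(e_\ell)=v$; choosing a third incident edge $e$ with $h(e)=v$ and $e\notin\{e_1',e_\ell\}$, the walk $e,e_1,\ldots,e_\ell,e'$ exhibits $e\to^* e'$. With this single hub in hand, connectivity of $G$ together with $\mindeg(G)\geq 2$ supplies non-backtracking paths from any edge to $\{h,h'\}$ and from $\{h,h'\}$ to any edge, and concatenating through $P_{h,h'}$ when needed yields irreducibility. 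One explicit construction thus replaces both your universal reversibility lemma and your separate $H$-connectivity argument.
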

\begin{proof}
    To prove that the condition on $G$ is required, assume that the NBRW on $G$ is irreducible. 
    The condition $\mindeg(G) \geq 2$ must hold since the walk is well defined. 
    In addition, $G$ must be connected, because otherwise one can find edges $e, f \in \dirE$ with no non-backtracking path from $e$ to $f$. 
    It remains to exclude the possibility that $G$ is connected with $\mindeg(G) = \maxdeg(G) = 2$, namely that $G$ is a cycle. 
    Indeed, in this case, for any $e \in \dirE$, there is no non-backtracking path from $e$ to $e'$.
    
    To prove that the condition on $G$ implies irreducibility, 
    we first argue that it suffices to find a path $P_{h,h'}$ from some edge $h$ to its inverse $h'$ to imply irreducibility.
    Indeed, given any pair of edges $e,f \in \dirE$, because $G$ is connected 
    there is a non-backtracking path $P_1$ connecting $e$ to either $h$ or $h'$ and 
    there is a non-backtracking path $P_2$ connecting $h$ or $h'$ to $f$. 
    Therefore, there is a non-backtracking path from $e$ to $f$ formed by concatenating $P_1$, possibly $P_{h,h'}$ or its inverse, and $P_2$.

    To conclude, observe that the condition $\mindeg(G) \geq 2$ implies the existence of some non-backtracking cycle $C = e_1, e_2, \ldots, e_\ell$. 
    As the graph is connected with $\maxdeg(G) > 2$, there must be a vertex $v \in C$ with $\deg(v)>2$,
    and we may assume that $t(e_1)=h(e_\ell)=v$, as depicted in Figure~\ref{fig:vertex_deg3}.
    \begin{figure}[h]
        \centering
        \includegraphics[width=0.3\textwidth,clip,trim=1mm 1mm 1mm 1mm]{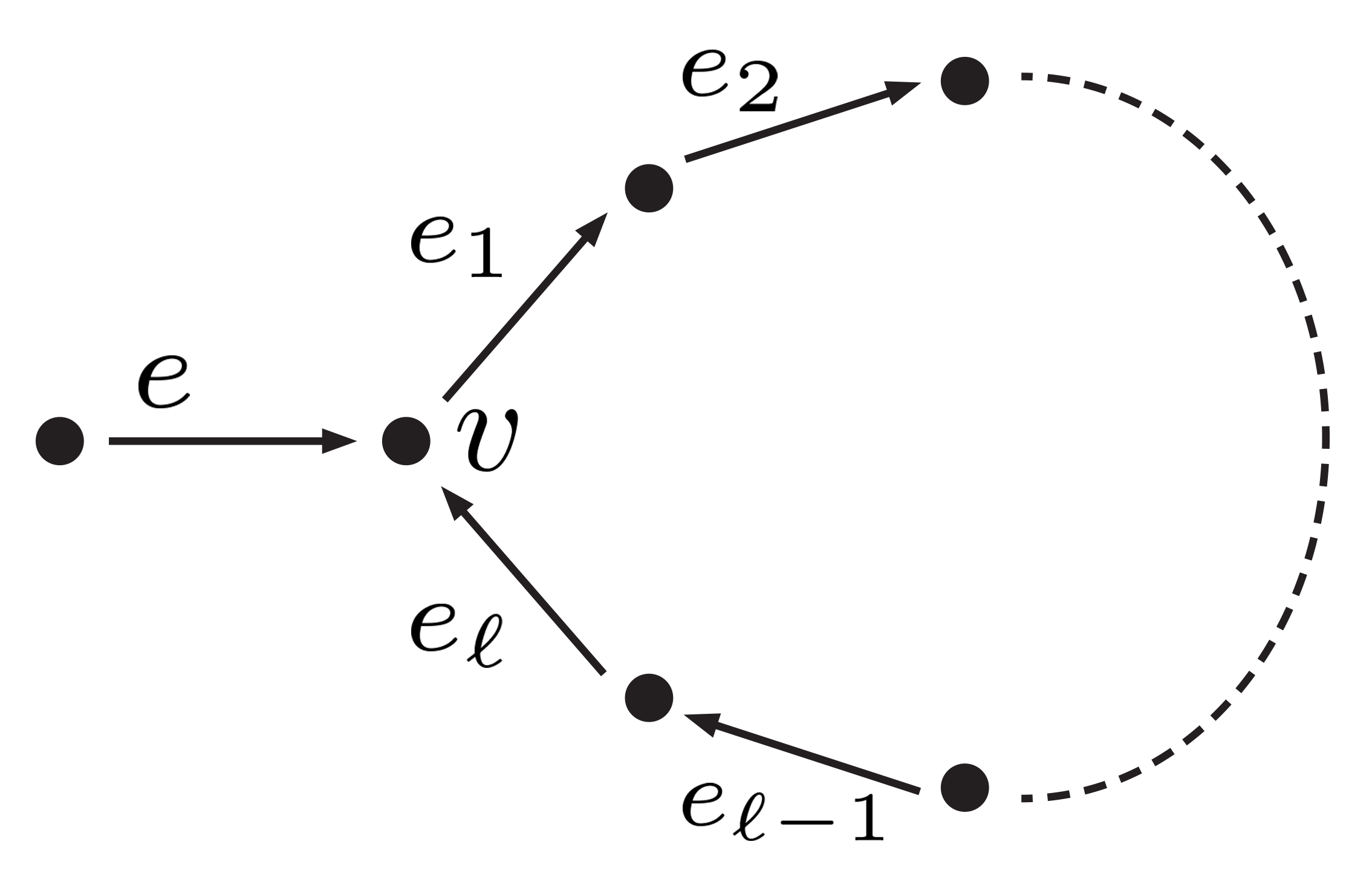}
        \caption{A cycle with a vertex of degree more than two.}
        \label{fig:vertex_deg3}
    \end{figure}
    Since $\deg(v)>2$ there is some edge $e$ with $h(e)=v$ distinct from $e_1'$ and $e_\ell$. Then the path $e,e_1, e_2, \ldots, e_\ell,e'$ is a path from $e$ to $e'$, implying the Markov chain irreducibility.
\end{proof}

Inspired by Claim~\ref{claim:glover2021non} we will say that a graph $G$
is \emph{NB-irreducible} if $G$ is connected with $\mindeg(G) \geq 2$ and $\maxdeg(G) > 2$.

\subsection{The Universal Covering Tree and its Growth Rate $\rho$}\label{subsection:universal}
Given a connected graph $G$ with minimal degree at least 2 and some arbitrary vertex $v_0 \in V(G)$, 
we define its universal cover $\tilde{G}$.
It is an infinite tree, where the vertex set $V(\tilde{G})$ is the set of all finite non-backtracking walks from $v_0$, 
and where two vertices of $\tilde{G}$ are adjacent if one walk extends the other by a single step.
Its root is the empty walk $\epsilon$, and it can be verified that 
changing $v_0$ yields an isomorphic tree. We say that a walk has length $\ell$ if it consists of $\ell$ edges.

\begin{definition}
    Given a finite connected graph $G$ with minimal degree at least 2, let:
    \begin{itemize}
        \item $\mathcal{B}_{\tilde{G},r}(v)$ denote the radius $r$ ball around the vertex $v$ in $\tilde{G}$, 
        \item $\Omega_{e,\ell}$ be the set of length $\ell$ non-backtracking walks in $G$ starting from $e \in \dirE$,
        \item $\Omega_\ell = \cup_{e \in \dirE} \Omega_{e,\ell}$ be the set of all length $\ell$ non-backtracking walks in $G$.
    \end{itemize}
\end{definition}

\noindent
The growth rate of $\tilde{G}$, denoted $\rho(G)$, is defined by one of the following alternative definitions, given by the following lemma. Recall that for some matrix $A$, its spectral radius $\rho(A)$ is defined as its maximal eigenvalue in absolute value. The Perron-Frobenius Theorem (Theorem 8.4.4 in~\cite{horn2012matrix}) states that for any non-negative irreducible matrix $A$, $\rho(A)>0$ is an algebraically simple eigenvalue with strictly positive left and right eigenvectors. 

\begin{lemma}[\cite{angel2015non,hoory2024girth}]\label{lemma:rho_Perron}
    Given an NB-irreducible graph $G$, the following definitions for $\rho(G)$ are equivalent:        
    \begin{eqnarray*}
        \rho(G) 
            &=& \lim_{r \rightarrow \infty} |\mathcal{B}_{\tilde{G},r}(v)|^{1/r} 
            =   \lim_{\ell \rightarrow \infty} |\Omega_{e,\ell}|^{1/\ell}
            =   \lim_{\ell  \rightarrow \infty} |\Omega_\ell|^{1/\ell} \\
            &=& \lim_{\ell \rightarrow \infty} \left( \nu_s \ B^\ell \ \overline{1} \right)^{1/\ell}
            = \rho(B),
    \end{eqnarray*}
    where $B$ denotes the non-backtracking adjacency matrix, $\rho(B)$ is the largest (Perron) eigenvalue of $B$, and $\overline{1}$ is the all-ones vector.
    Equality holds regardless of the choice of $v \in V(\tilde{G})$ and $e \in \dirE$, where applicable.
\end{lemma}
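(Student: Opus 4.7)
The plan is to reduce all five asymptotic quantities in the lemma to a single Perron-Frobenius statement about the non-backtracking matrix $B$. By Claim~\ref{claim:glover2021non}, NB-irreducibility of $G$ makes $B$ irreducible and non-negative, so Perron-Frobenius supplies a Perron eigenvalue $\rho(B)>0$ together with strictly positive left and right eigenvectors $\nu_+$ and $w_+$. The strict positivity of these eigenvectors is the engine of the estimates below.

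First I would translate each counting quantity into an expression in $B$. The entries of $B^\ell$ count length-$\ell$ edge-to-edge non-backtracking transitions, so (up to a harmless indexing convention)
\begin{equation*}
|\Omega_{e,\ell}| = (B^\ell \overline{1})_e, \qquad |\Omega_\ell| = \overline{1}^\top B^\ell \overline{1} = |\dirE|\cdot \nu_s B^\ell \overline{1}.
\end{equation*}
For the universal cover, a depth-$k$ vertex of $\tilde{G}$ is a length-$k$ non-backtracking walk out of $v$, which decomposes as a first directed edge $e_1$ with $t(e_1)=v$ followed by a walk in $\Omega_{e_1,k-1}$, yielding
\begin{equation*}
|\mathcal{B}_{\tilde{G},r}(v)| = 1 + \sum_{k=1}^{r}\sum_{e:\,t(e)=v}(B^{k-1}\overline{1})_e.
\end{equation*}

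I would then invoke the following consequence of Perron-Frobenius for an irreducible non-negative matrix $B$: for any non-zero non-negative vectors $u,w$, $\lim_{\ell\to\infty}(u^\top B^\ell w)^{1/\ell} = \rho(B)$. The upper bound is Gelfand's formula $\|B^\ell\|^{1/\ell}\to\rho(B)$. For the lower bound, irreducibility yields a bounded $m$ such that both $\sum_{k=0}^{m}B^k u$ and $\sum_{k=0}^{m}(B^\top)^k w$ are strictly positive, and therefore dominate positive multiples of $w_+$ and $\nu_+$ respectively; expanding the double sum gives $u^\top B^{\ell+s}w\geq c\,\rho(B)^\ell$ for some $s\leq 2m$ and constant $c>0$. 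Applied to the three explicit expressions $(B^\ell \overline{1})_e$, $\overline{1}^\top B^\ell \overline{1}$, and $\nu_s B^\ell \overline{1}$, this gives the equalities for $|\Omega_{e,\ell}|$, $|\Omega_\ell|$, and $\nu_s B^\ell \overline{1}$; summing the estimates in $k$ and bounding by a geometric series gives $|\mathcal{B}_{\tilde{G},r}(v)|^{1/r}\to\rho(B)$ as well. Independence of the choice of $e$ and $v$ is automatic, since those choices enter only through multiplicative constants that vanish under the $\ell$-th root.

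The step that requires the most care, and the main obstacle, is the potentially periodic behaviour of $B$: when $G$ is bipartite the period is $p=2$, so $u^\top B^\ell w$ can oscillate between exponential regimes and may vanish on certain residues modulo $p$, making the honest $\ell$-th-root limit (as opposed to only its $\limsup$) less obvious. I would handle this by passing to $B^p$, which is primitive on each of its $p$ cyclic classes, combining the resulting primitive-Perron estimates via $\rho(B^p)=\rho(B)^p$, and exploiting the monotonicity of $|\Omega_{e,\ell}|$ and $|\Omega_\ell|$ in $\ell$ (which follows from $\mindeg(G)\geq 2$, since any non-backtracking walk admits an extension) to pin both the $\liminf$ and the $\limsup$ of each $\ell$-th-root sequence to $\rho(B)$. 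With this in place the remaining matrix identities and Perron majorisations are routine.
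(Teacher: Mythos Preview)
The paper does not supply its own proof of this lemma: it is stated with a citation to \cite{angel2015non,hoory2024girth} and used as a black box throughout. So there is nothing in the paper to compare your argument against.

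Your proposal is correct and essentially the standard route. One simplification: your careful treatment of periodicity is not actually needed for the specific quantities in the lemma. In every case the vector being hit by $B^\ell$ is the all-ones vector $\overline{1}$, and since the right Perron eigenvector $w_+$ is strictly positive there are constants $0<c\le C$ with $c\,w_+ \le \overline{1} \le C\,w_+$ entrywise. Hence $c\,\rho(B)^\ell (w_+)_e \le (B^\ell\overline{1})_e \le C\,\rho(B)^\ell (w_+)_e$ for every $e$ and every $\ell$, with no oscillation and no need to pass to $B^p$ or invoke monotonicity. The same two-sided bound handles $\overline{1}^\top B^\ell \overline{1}$, $\nu_s B^\ell \overline{1}$, and the ball count after your decomposition. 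Periodicity only bites when \emph{both} the left and right test vectors are supported in single cyclic classes, which never occurs here.
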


We elaborate on why the last equality holds, as this argument is repeatedly used for different matrices throughout the paper. This equality is a consequence of Gelfand's formula, the fact that the entry-wise $\ell_1$ norm is a matrix norm (5.6.0.1) in \cite{horn2012matrix}, and that $\overline{1}^t A \overline{1}$ is the same as the entry-wise $\ell_1$ norm for any non-negative matrix $A$.
\begin{proposition}[Gelfand formula, Corollary 5.6.14 in \cite{horn2012matrix}]\label{proposition:Gelfand}
    Let $\| \cdot \|$ be a matrix norm on $M_n$ and let $A\in M_n$.
    Then $\rho(A)=\lim_{k\to\infty}\|A^k\|^{1/k}$.
\end{proposition}

\subsection{The Average Growth Rate $\Lambda$ and its Connection to $\rho$}\label{subsection:Lambda}
\begin{definition}
   The average growth rate $\Lambda(G)$ is the rate predicted by the stationary distribution $\nu_s$ of the NBRW:
\begin{equation}\label{eq:Lambda}
    \Lambda(G) = \Bigl( \prod_{e \in \dirE} \outdeg(e) \Bigr)^{1/{|\dirE|}} = \prod_{v \in V(G)} \Bigl( \deg(v)-1 \Bigr)^{\deg(v)/{|\dirE|}}.
\end{equation} 
\end{definition}

When no confusion may arise, we drop the graph argument and use $\rho$ and $\Lambda$ instead of $\rho(G)$ and $\Lambda(G)$. 
One should note that while the graph parameter $\Lambda$ depends only on the degree distribution of the graph,
the parameter $\rho$ depends on the graph structure as well. 
The average growth rate $\Lambda$ played a major role in~\cite{alon2002moore,hoory2002size} 
and corresponds to the average rate of entropy growth for the NBRW.

In order to continue, note that the NBRW on $G$, with initial state drawn from the stationary distribution $\nu_s$, 
induces a probability measure $\mu$ on the set $\Omega_\ell$ of length $\ell$ non-backtracking walks on $G$. 
The probability of the walk $\omega = (e_0,e_1,\ldots,e_\ell)\in \Omega_\ell$ is: 
\begin{equation}
    \mu[\omega] = \frac{1}{|\dirE|} \prod_{i=0}^{\ell-1} \frac{1}{\outdeg(e_i)}
\end{equation}

Given a strictly positive function $\beta:\dirE\rightarrow\R^+$ 
and a walk $\omega = (e_0,e_1,\ldots,e_\ell)\in \Omega_\ell$, 
define the random variable 
$X_\beta(\omega) = \prod_{i=0}^{\ell-1} \beta(e_i)$.

\begin{lemma}\label{lemma:Xbeta_lb}
    \begin{equation*}
        \E_{\Omega_\ell}[X_\beta] \geq \left( \prod_{e \in \dirE} \beta(e) \right)^{\frac{\ell}{|\dirE|}}
    \end{equation*}
\end{lemma}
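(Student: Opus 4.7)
The plan is to reduce the claim to Jensen's inequality applied to the exponential function. Taking logarithms, note that $\log X_\beta(\omega) = \sum_{i=0}^{\ell-1} \log \beta(e_i)$, so by linearity of expectation,
\begin{equation*}
    \mathbb{E}_{\Omega_\ell}[\log X_\beta] = \sum_{i=0}^{\ell-1} \mathbb{E}_{\Omega_\ell}[\log \beta(e_i)].
\end{equation*}

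The key step is to compute each term on the right. Because the initial edge $e_0$ is drawn from the stationary distribution $\nu_s$, and $\nu_s$ is stationary for $\Pi_G$ by the claim in Section~\ref{subsection:nbrw}, the marginal distribution of every $e_i$ along the walk is again $\nu_s$, i.e.\ uniform on $\dirE$. Hence $\mathbb{E}[\log \beta(e_i)] = \tfrac{1}{|\dirE|} \sum_{e \in \dirE} \log \beta(e)$ for each $i$, and summing gives
\begin{equation*}
    \mathbb{E}_{\Omega_\ell}[\log X_\beta] = \frac{\ell}{|\dirE|} \sum_{e \in \dirE} \log \beta(e) = \log \left( \prod_{e \in \dirE} \beta(e) \right)^{\ell / |\dirE|}.
\end{equation*}

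Finally, since the exponential function is convex, Jensen's inequality yields $\mathbb{E}[X_\beta] = \mathbb{E}[\exp(\log X_\beta)] \geq \exp(\mathbb{E}[\log X_\beta])$, and substituting the expression above gives the stated lower bound. There is no real obstacle here; the only subtlety worth stating explicitly is the stationarity argument that ensures every $e_i$ (not just $e_0$) has the uniform marginal under $\mu$, which is what allows the sum over positions to collapse into $\ell$ copies of the same geometric-mean expression.
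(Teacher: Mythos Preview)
Your proof is correct and is essentially identical to the paper's own argument: both apply Jensen's inequality to $\exp$ to get $\E[X_\beta]\ge\exp(\E[\log X_\beta])$, and both evaluate $\E[\log X_\beta]$ via linearity together with the observation that each $e_i$ has marginal $\nu_s$. The only cosmetic difference is the order of presentation.
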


\begin{proof}
    \begin{eqnarray*}
        \E_{\Omega_\ell}[X_\beta] 
        &=& \sum_{\omega \in \Omega_\ell} \mu[\omega] X_\beta(\omega) 
        \geq \exp\left( \E_{\Omega_\ell}[\log X_\beta]\right)\\
        &=& \exp\left( \frac{\ell}{|\dirE|} \sum_{e \in \dirE} \log \beta(e)\right)
        = \left( \prod_{e \in \dirE} \beta(e) \right)^\frac{\ell}{|\dirE|},        
    \end{eqnarray*}
    where the inequality is Jensen's inequality, and the equality before the last follows by linearity of expectation and the fact that the marginal distribution of $e_i$ is $\nu_s$ for all $i$.
\end{proof}

\begin{lemma}\label{lemma:ModifiedPiPerronLowerBound}
    For an NB-irreducible graph $G$, define the modified NBRW Markov chain $\Pi_\beta$ by:
    \begin{equation*}
        (\Pi_\beta)_{e,f} = (\Pi)_{e,f} \cdot \beta(e).
    \end{equation*}
    Then the Perron eigenvalue of $\Pi_\beta$ satisfies:
    \begin{equation}
        \rho(\Pi_\beta) 
        =    \lim_{\ell \rightarrow \infty} \left( \E_{\Omega_\ell}[X_\beta] \right)^\frac{1}{\ell} 
        \geq \left( \prod_{e \in \dirE} \beta(e) \right)^\frac{1}{|\dirE|}.
    \end{equation}
\end{lemma}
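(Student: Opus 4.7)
The plan is to express $\mathbb{E}_{\Omega_\ell}[X_\beta]$ as a bilinear form in powers of $\Pi_\beta$, and then to apply Perron--Frobenius to extract the exponential growth rate and identify it with $\rho(\Pi_\beta)$.

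First I would unpack the expectation. For $\omega = (e_0, e_1, \ldots, e_\ell) \in \Omega_\ell$, the definition of the NBRW started from $\nu_s$ gives
\[
\mu[\omega] \;=\; \nu_s(e_0) \prod_{i=0}^{\ell-1} \pi_G(e_i, e_{i+1}),
\]
while $X_\beta(\omega) = \prod_{i=0}^{\ell-1} \beta(e_i)$. Grouping each factor $\beta(e_i)$ with $\pi_G(e_i, e_{i+1})$ one recognizes $(\Pi_\beta)_{e_i, e_{i+1}}$, so summing over all $\omega \in \Omega_\ell$ collapses to the matrix identity
\[
\mathbb{E}_{\Omega_\ell}[X_\beta] \;=\; \nu_s \, \Pi_\beta^{\,\ell}\, \overline{1}.
\]

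Next I would apply Perron--Frobenius. Because $\beta$ is strictly positive, $\Pi_\beta$ has exactly the same zero/nonzero pattern as $\Pi_G$; by Claim~\ref{claim:glover2021non}, NB-irreducibility of $G$ makes $\Pi_G$ (and therefore $\Pi_\beta$) a nonnegative irreducible matrix. Let $v > 0$ be a right Perron eigenvector of $\Pi_\beta$ with eigenvalue $\rho(\Pi_\beta)$. Since $v$ is componentwise strictly positive and finite, there exist constants $0 < c_1 \leq c_2$ with $c_1 v \leq \overline{1} \leq c_2 v$, and applying $\nu_s \Pi_\beta^{\ell}$ to this sandwich gives
\[
c_1\, \rho(\Pi_\beta)^{\ell} (\nu_s v) \;\leq\; \nu_s\, \Pi_\beta^{\ell}\, \overline{1} \;\leq\; c_2\, \rho(\Pi_\beta)^{\ell} (\nu_s v).
\]
Since $\nu_s v > 0$, taking $\ell$-th roots and sending $\ell \to \infty$ yields
\[
\lim_{\ell \to \infty} \bigl( \mathbb{E}_{\Omega_\ell}[X_\beta] \bigr)^{1/\ell} \;=\; \rho(\Pi_\beta),
\]
which is the first assertion of the lemma. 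Note that no aperiodicity is required; the sandwich argument avoids the subtleties of periodic spectrum.

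Finally, combining this limit with Lemma~\ref{lemma:Xbeta_lb}, which gives $\mathbb{E}_{\Omega_\ell}[X_\beta] \geq \bigl(\prod_{e \in \dirE} \beta(e)\bigr)^{\ell/|\dirE|}$ for every $\ell$, yields the claimed lower bound on $\rho(\Pi_\beta)$ by raising to the $1/\ell$ power and passing to the limit. The only mildly delicate point is the Perron--Frobenius application; the potential obstacle would be that either $\nu_s$ or $\overline{1}$ is orthogonal to the Perron eigenvector, which is ruled out here because both vectors are strictly positive, so the growth of $\nu_s \Pi_\beta^\ell \overline{1}$ picks up the full spectral radius rather than some smaller eigenvalue.
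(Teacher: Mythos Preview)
Your proof is correct and follows the same approach as the paper: write $\mathbb{E}_{\Omega_\ell}[X_\beta] = \nu_s\,\Pi_\beta^{\ell}\,\overline{1}$, identify the exponential growth rate with $\rho(\Pi_\beta)$, and invoke Lemma~\ref{lemma:Xbeta_lb}. The only difference is that the paper asserts $\rho(\Pi_\beta) = \lim_{\ell\to\infty}(\nu_s\,\Pi_\beta^{\ell}\,\overline{1})^{1/\ell}$ without further comment, whereas you supply the Perron--Frobenius sandwich argument explicitly; your version is therefore somewhat more self-contained.
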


\begin{proof}
    \begin{eqnarray*}
         \nu_s \left( \Pi_\beta \right)^{\ell} \overline{1} 
         &=& \sum_{\omega = (e_0,e_1,\ldots,e_\ell) \in \Omega_\ell} \mu[\omega] \ \prod_{i=0}^{\ell-1} \beta(e_i) 
         = \E_{\Omega_\ell}[X_\beta].
    \end{eqnarray*}
    Therefore by Proposition~\ref{proposition:Gelfand},
    \begin{equation*}
         \rho(\Pi_\beta) 
         = \lim_{\ell \rightarrow \infty} \left( \nu_s \ ( \Pi_\beta )^\ell \ \overline{1} \right)^{1/\ell}
         =   \lim_{\ell \rightarrow \infty} \E_{\Omega_\ell}[X_\beta]^{1/\ell}.
    \end{equation*}
    The result follows by applying Lemma~\ref{lemma:Xbeta_lb} to lower bound the right hand side.
\end{proof}

\noindent
When plugging $\beta(e) = \outdeg(e)$, we get $\Pi_\beta = B$.
Therefore, Lemma~\ref{lemma:ModifiedPiPerronLowerBound} with $\beta(e) = \outdeg(e)$ yields the inequality:
\begin{corollary}\label{cor:rho_ge_lambda}
     $\rho \geq \Lambda$.    
\end{corollary}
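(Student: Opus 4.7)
The plan is to instantiate Lemma~\ref{lemma:ModifiedPiPerronLowerBound} at the specific choice $\beta(e) = \outdeg(e)$, and show that both sides of the inequality it produces collapse to $\rho$ and $\Lambda$ respectively.

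First, I would verify that with $\beta(e) = \outdeg(e)$ the modified transition matrix $\Pi_\beta$ coincides with the non-backtracking adjacency matrix $B$. This is immediate entry by entry: if $e \rightarrow f$, then $(\Pi_\beta)_{e,f} = \pi_G(e,f) \cdot \outdeg(e) = \frac{1}{\outdeg(e)} \cdot \outdeg(e) = 1 = B_{e,f}$, while if $e \not\rightarrow f$ both entries vanish. Hence the Perron eigenvalues agree, $\rho(\Pi_\beta) = \rho(B)$, and by Lemma~\ref{lemma:rho_Perron} this common value equals $\rho(G)$.

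Next, I would read off the lower bound from Lemma~\ref{lemma:ModifiedPiPerronLowerBound}, which gives
\begin{equation*}
    \rho(G) = \rho(\Pi_\beta) \geq \Bigl( \prod_{e \in \dirE} \outdeg(e) \Bigr)^{1/|\dirE|}.
\end{equation*}
By the defining identity~\eqref{eq:Lambda}, the right-hand side is exactly $\Lambda(G)$, yielding $\rho(G) \geq \Lambda(G)$ as claimed.

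There is no real obstacle here, as every ingredient has already been established: the only work is confirming that $\Pi_\beta = B$ for the chosen $\beta$ and recognizing the resulting lower bound as $\Lambda$. The conceptual content of the corollary has already been absorbed into Lemma~\ref{lemma:Xbeta_lb}, where Jensen's inequality converts the arithmetic mean $\mathbb{E}_{\Omega_\ell}[X_\beta]$ into a geometric mean over $\dirE$; the corollary is simply the statement of that gap in the language of $\rho$ and $\Lambda$.
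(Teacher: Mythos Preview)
Your proposal is correct and matches the paper's own argument essentially verbatim: the paper also plugs $\beta(e)=\outdeg(e)$ into Lemma~\ref{lemma:ModifiedPiPerronLowerBound}, notes that $\Pi_\beta=B$, and reads off $\rho(B)\ge\Lambda$. Your explicit entry-by-entry check and the reference to Lemma~\ref{lemma:rho_Perron} are minor elaborations, but the route is identical.
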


\noindent
Recall the random variable $R_\ell$ counting the number of bits used by the length $\ell$ NBRW starting from the stationary distribution $\nu_s$,
discussed in Section~\ref{subsec:results}. Then:
\begin{corollary}\label{cor:Rl}
    $\Lambda = 2^{\E[R_{\ell}/\ell]}$ and $\log_2 \rho(B) = \lim_{\ell \rightarrow \infty} \frac{1}{\ell} \log_2 \E[2^{R_\ell}]$.
\end{corollary}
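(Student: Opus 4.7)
The plan is to unpack the definition $R_\ell = \log_2 X_\beta$ with $\beta(e) = \outdeg(e)$ and then read off each of the two claimed equalities from earlier results. With this choice of $\beta$, since at vertex $v$ with incoming edge $e_i$ the walker picks one of $\outdeg(e_i) = \deg(h(e_i))-1$ allowed outgoing edges uniformly at random, the number of random bits consumed at step $i$ is $\log_2 \outdeg(e_i) = \log_2 \beta(e_i)$. Summing over $i = 0,\dots,\ell-1$ gives $R_\ell = \log_2 X_\beta$, hence $2^{R_\ell} = X_\beta$. Moreover $(\Pi_\beta)_{e,f} = \pi_G(e,f)\,\outdeg(e) = B_{e,f}$, so $\Pi_\beta = B$.

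For the first equality, I would apply linearity of expectation to write
\begin{equation*}
    \mathbb{E}[R_\ell] \;=\; \sum_{i=0}^{\ell-1} \mathbb{E}\bigl[\log_2 \beta(e_i)\bigr].
\end{equation*}
Since the walk is started from the stationary distribution $\nu_s$, the marginal of $e_i$ is the uniform distribution on $\dirE$ for every $i$, so each summand equals $\frac{1}{|\dirE|} \sum_{e \in \dirE} \log_2 \outdeg(e) = \log_2 \Lambda$ by the definition~\eqref{eq:Lambda} of $\Lambda$. Dividing by $\ell$ and exponentiating yields $\Lambda = 2^{\mathbb{E}[R_\ell/\ell]}$.

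For the second equality, $\mathbb{E}[2^{R_\ell}] = \mathbb{E}_{\Omega_\ell}[X_\beta]$ by construction. Lemma~\ref{lemma:ModifiedPiPerronLowerBound} identifies $\rho(\Pi_\beta) = \lim_{\ell \to \infty} \mathbb{E}_{\Omega_\ell}[X_\beta]^{1/\ell}$, and since $\Pi_\beta = B$ we have $\rho(\Pi_\beta) = \rho(B) = \rho(G)$ by Lemma~\ref{lemma:rho_Perron}. Taking $\log_2$ of both sides gives $\log_2 \rho(G) = \lim_{\ell \to \infty} \frac{1}{\ell} \log_2 \mathbb{E}[2^{R_\ell}]$, as required.

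There is no real obstacle: the statement is a bookkeeping corollary of the two lemmas just established, and the only thing to verify is that the specific choice $\beta(e) = \outdeg(e)$ simultaneously turns $X_\beta$ into $2^{R_\ell}$ and $\Pi_\beta$ into $B$, which is immediate from the definitions.
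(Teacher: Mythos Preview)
Your proof is correct and follows essentially the same route as the paper: identify $R_\ell = \log_2 X_\beta$ with $\beta(e)=\outdeg(e)$, read off the first equality from the stationarity computation underlying Lemma~\ref{lemma:Xbeta_lb}, and read off the second from Lemma~\ref{lemma:ModifiedPiPerronLowerBound} together with $\Pi_\beta = B$ and Lemma~\ref{lemma:rho_Perron}. The paper merely cites the two lemmas without spelling out the details you provide, so your write-up is if anything more explicit.
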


\begin{proof}
    By definition, $R_\ell = \log_2(X_\beta)$ with $\beta(e) = \outdeg(e)$. 
    Therefore, by the last two equalities in the proof of Lemma~\ref{lemma:Xbeta_lb}, we have $\E[R_\ell] = \E[\log_2 X_\beta] = l \log_2 \Lambda$, which is the first equality.
    
    For the second equality, we use Lemma~\ref{lemma:ModifiedPiPerronLowerBound} with $\beta(e) = \outdeg(e)$, 
    yielding that $\log_2 \rho(B) = \log_2 \lim_{\ell \rightarrow \infty} \left( \E[X_\beta] \right)^\frac{1}{\ell} = \lim_{\ell \rightarrow \infty} \frac{1}{\ell} \log_2 \E[2^{R_\ell}]$.
\end{proof}

\section{The Main Theorem}\label{sect:main_theorem}
    Given a non-backtracking path $P=(e_0,e_1 ,\ldots,e_{\ell-1})$ in an NB-irreducible graph $G$, for some $\ell \geq 1$, we say that $P$ is a \emph{suspended path} if:
    \begin{enumerate}
        \item $\outdeg(e_i) = 1$ for $i<\ell-1$ and $\outdeg(e_{\ell-1}) > 1$
        \item $\indeg(e_i)=1$ for all $i>0$ and $\indeg(e_0) > 1$.
    \end{enumerate}
    The length of the path is $|P|=\ell$, we denote $\outdeg(P) = \outdeg(e_{\ell-1})$ and $\indeg(P) = \indeg(e_0)$.
    We denote the set of all suspended paths of $G$ by $\mathcal{S}=\mathcal{S}(G)$ and note that the suspended paths form a partition of $\dirE$.

\begin{definition}
    The {\bf suspended path condition} holds for the graph $G$ if
    \begin{equation}\label{eq:2-pathcond}
        \outdeg(P) \cdot \indeg(P) = {\Lambda(G)}^{2|P|} \mbox{ for all suspended paths } P \in \mathcal{S}.
    \end{equation}
\end{definition}

\begin{definition}
    The {\bf cycle condition} holds for the graph $G$ if 
    \begin{equation}
        \prod_{e \in C} \outdeg(e) = \Lambda(G)^{|C|} \, \mbox{ for every non-backtracking cycle } C.\label{eq:cyclecond}
    \end{equation}    
\end{definition}

\noindent
Following is our main result:

\begin{theorem}\label{theorem:main}
    For an NB-irreducible graph $G$ 
    the following are equivalent:
    \begin{enumerate}
        \item $\rho(G) = \Lambda(G)$. \label{theorem:main_rho=Lambda}
        \item $G$ satisfies the suspended path condition.\label{theorem:main_path}
        \item $G$ satisfies the cycle condition.\label{theorem:main_cycle}
    \end{enumerate}
\end{theorem}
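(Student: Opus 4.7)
The plan is to establish the cycle of implications $(2)\Rightarrow(3)$, $(2)\Rightarrow(1)$, and $(1)\Rightarrow(2)$, which together yield all three equivalences. The unifying tool is an explicit $\Lambda$-eigenvector of the non-backtracking adjacency matrix $B$: the path condition will let me build one by hand, and conversely the Perron eigenvector of $B$ will be forced into a structural form that encodes the path condition. Throughout, NB-irreducibility via Claim~\ref{claim:glover2021non} permits invoking Perron--Frobenius to identify a strictly positive eigenvector's eigenvalue with $\rho$.

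For $(2)\Rightarrow(3)$, any non-backtracking cycle $C$ decomposes as a concatenation $P_1\cdots P_k$ of suspended paths whose endpoints form a cyclic sequence $v_0, v_1,\ldots, v_k = v_0$ of vertices of degree greater than two. Only the last directed edge of each $P_i$ carries $\outdeg>1$, so $\prod_{e\in C}\outdeg(e) = \prod_{i=1}^k(\deg(v_i)-1)$ and $|C|=\sum_i|P_i|$; multiplying the path condition $(\deg(v_{i-1})-1)(\deg(v_i)-1) = \Lambda^{2|P_i|}$ over $i$, and using that each $v_i$ appears as an endpoint of two consecutive $P_i$'s, gives $\bigl(\prod_i(\deg(v_i)-1)\bigr)^2 = \Lambda^{2|C|}$ and hence the cycle condition. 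For $(2)\Rightarrow(1)$, I would build a positive $\Lambda$-eigenvector of $B$ as follows: on each vertex $v$ with $\deg(v)>2$ put $\psi(v):=(\deg(v)-1)^{-1/2}$, and on each suspended path $P=(e_0,\ldots,e_{|P|-1})$ from $u$ to $v$ set $\phi(e_i):=\Lambda^{i}\psi(u)$. At internal degree-$2$ vertices the equation $B\phi=\Lambda\phi$ is automatic since there is a unique forward continuation that multiplies $\phi$ by $\Lambda$, while at the degree-$>2$ endpoint the required identity $(\deg(v)-1)\psi(v)=\Lambda\phi(e_{|P|-1})$ simplifies to $\Lambda^{|P|}=\sqrt{(\deg(u)-1)(\deg(v)-1)}$, which is exactly the path condition. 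Perron--Frobenius then yields $\rho=\rho(B)=\Lambda$.

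For $(1)\Rightarrow(2)$, the main content, let $\phi>0$ satisfy $B\phi=\Lambda\phi$; I aim to show that $\phi$ is constant on the set of directed edges leaving every vertex $v$ with $\deg(v)>2$. Granting this claim and writing $c_v$ for that common value, propagation of the eigenvector equation along a suspended path $P$ from $u$ to $v$ gives $\phi(e_i)=\Lambda^{i}c_u$; the eigenvector equations at the edges $e_0^{-1}$ and $e_{|P|-1}$ (incoming to $u$ and to $v$ respectively) then yield the symmetric pair $(\deg(u)-1)c_u = \Lambda^{|P|}c_v$ and $(\deg(v)-1)c_v = \Lambda^{|P|}c_u$, whose product is precisely the path condition. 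To establish the constancy claim, I would use that $\rho=\Lambda$ means the NBRW attains the topological entropy of the shift of non-backtracking walks, so by uniqueness it coincides with the Parry measure of maximal entropy, whose transitions $\phi(f)/(\Lambda\phi(e))$ are constant over the forward continuations $f$ of each $e$. Comparing to the NBRW transitions $1/\outdeg(e)$ forces $\phi(f)$ to depend only on $e$ and not on the continuation $f$, and varying $e$ among the edges entering a fixed $v$ with $\deg(v)\geq 3$ promotes this to constancy of $\phi$ on all edges leaving $v$. The main obstacle is precisely this identification step: either invoke the classical uniqueness of the maximum-entropy measure for an irreducible Markov shift, or give a self-contained argument via the equality case of Jensen's inequality from Lemma~\ref{lemma:ModifiedPiPerronLowerBound} together with a vertex-wise AM--GM identity, to pin down the structure of $\phi$ directly from $B\phi=\Lambda\phi$.
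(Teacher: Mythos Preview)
Your implication scheme does not close: from $(2)\Rightarrow(3)$, $(2)\Rightarrow(1)$, and $(1)\Rightarrow(2)$ you obtain $(1)\Leftrightarrow(2)$ and $(2)\Rightarrow(3)$, but nothing returns from $(3)$, so the equivalence with the cycle condition is left unproved. The missing direction $(3)\Rightarrow(2)$ has real content. A fix consistent with your own setup: the cycle condition says that the weight $\log\outdeg(e)-\log\Lambda$, placed on each transition $e\to f$ of the strongly connected $B$-digraph, has zero sum around every directed cycle and is therefore a coboundary $\varphi(f)-\varphi(e)$; since this weight depends only on $e$, the potential $\varphi$ is automatically constant on the out-neighbours of each $e$, hence on all edges leaving each vertex of degree~$>2$, after which your own endgame (the pair $(\deg(u)-1)c_u=\Lambda^{|P|}c_v$ and $(\deg(v)-1)c_v=\Lambda^{|P|}c_u$) delivers the path condition. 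The paper closes the loop along a different cycle, $(2)\Rightarrow(1)\Rightarrow(3)\Rightarrow(2)$: for $(1)\Rightarrow(3)$ it invokes Nussbaum's strict log-convexity theorem to produce a diagonal similarity $\Pi_G=kD^{-1}BD$ (so the potential $\varphi=\log D$ comes for free), and for $(3)\Rightarrow(2)$ it runs a combinatorial peeling argument, repeatedly deleting a suspended path of below-average weight until a cycle violating the cycle condition is isolated.

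Where your arguments do apply they are correct and take a genuinely different route from the paper. For $(2)\Rightarrow(1)$ you build an explicit positive $\Lambda$-eigenvector of $B$ and invoke Perron--Frobenius, whereas the paper bounds $\prod_i\outdeg(e_i)$ by $\Theta(1)\cdot\Lambda^{\ell}$ uniformly over walks and reads off $\rho=\Lambda$ from the growth-rate formula. For $(1)\Rightarrow(2)$ your Parry-measure identification is legitimate (the NBRW has Kolmogorov--Sinai entropy $\log\Lambda$, so $\rho=\Lambda$ makes it the unique measure of maximal entropy and forces $\phi(f)/(\Lambda\phi(e))=1/\outdeg(e)$), and the ``vertex-wise AM--GM'' alternative you allude to is in fact fully self-contained: apply AM--GM to each row of $B\phi=\rho\phi$, take the product over $e\in\dirE$, and use that $\outdeg(e)=\indeg(f)$ whenever $e\to f$ to see that the $\phi$-factors cancel, yielding $\rho\ge\Lambda$ with equality exactly when $\phi$ is constant on the continuations of each $e$. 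This bypasses both Nussbaum's theorem and the ergodic-theoretic uniqueness input, and is arguably the cleanest route to the core constancy claim.
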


The proof of the main theorem is split into three subsections. 
In Subsection~\ref{subsect:suspended_implies_equality} we prove that the suspended path condition implies $\rho = \Lambda$,
in Subsection~\ref{subsect:equality_implies_cycle} we prove that $\rho = \Lambda$ implies the cycle condition,
and Subsection~\ref{subsect:cycle_implies_suspended} completes the proof by proving that the cycle condition implies the suspended path condition.

\subsection{The suspended path condition implies $\rho = \Lambda$}\label{subsect:suspended_implies_equality}

We start the proof of Theorem~\ref{theorem:main} with the following lemma:

\begin{lemma}\label{lemma:suspended_implies_rho_eq_Lambda}
    If the NB-irreducible graph $G$ satisfies the suspended path condition then $\rho(G) = \Lambda(G)$.
\end{lemma}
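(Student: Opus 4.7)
The plan is to prove the reverse inequality $\rho \leq \Lambda$, since $\rho \geq \Lambda$ is already established by Corollary~\ref{cor:rho_ge_lambda}. Applying Lemma~\ref{lemma:ModifiedPiPerronLowerBound} with $\beta(e) = \outdeg(e)$ (so that $\Pi_\beta = B$) gives the formula $\rho = \lim_{\ell \to \infty} \mathbb{E}_{\Omega_\ell}[X_\beta]^{1/\ell}$. I would prove that under the suspended path condition, there is a constant $C$ depending only on $G$ such that $X_\beta(\omega) \leq C \cdot \Lambda^\ell$ for every $\omega \in \Omega_\ell$; taking expectations and then an $\ell$-th root would then yield $\rho \leq \Lambda$.

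For the pointwise bound, I would decompose $\omega = (e_0, \ldots, e_{\ell-1})$ according to its visits to the suspended paths $P_1, P_2, \ldots, P_k$ it traverses in order, where $P_1$ and $P_k$ may be entered or exited partway through, while $P_2, \ldots, P_{k-1}$ are traversed completely. Since $\outdeg(e) = 1$ on every non-final edge of a suspended path and $\outdeg(e) = \outdeg(P)$ on its final edge, the product collapses to
\begin{equation*}
    X_\beta(\omega) = \Bigl(\prod_{j=1}^{k-1} \outdeg(P_j)\Bigr) \cdot c,
\end{equation*}
where $c \in \{1, \outdeg(P_k)\}$ encodes whether the walk exits the final suspended path through its last edge. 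Setting $L = \sum_{j=1}^{k-1}|P_j|$, the difference $\ell - L$ is bounded in absolute value by the maximum suspended path length, a graph-dependent constant.

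The crucial calculation is to evaluate $\prod_{j=1}^{k-1} \outdeg(P_j)$ using the suspended path condition $\outdeg(P_j)\indeg(P_j) = \Lambda^{2|P_j|}$ together with the matching identity $\indeg(P_{j+1}) = \outdeg(P_j)$ (both equal $\deg(v) - 1$ at the shared branching vertex $v$ where $P_j$ ends and $P_{j+1}$ begins). Substituting $\indeg(P_j) = \outdeg(P_{j-1})$ for $j \geq 2$ and solving algebraically for $S = \prod_{j=1}^{k-1} \outdeg(P_j)$ yields
\begin{equation*}
    S = \Lambda^{L}\sqrt{\outdeg(P_{k-1})/\indeg(P_1)}.
\end{equation*}
The square root and the factor $c$ are each at most $\maxdeg(G) - 1$, while $\Lambda^{L-\ell}$ is bounded because $|L - \ell|$ is, so the desired inequality $X_\beta(\omega) \leq C\Lambda^\ell$ follows. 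The degenerate case $k = 1$ (the walk stays inside a single suspended path) gives $X_\beta(\omega) \leq \maxdeg(G) - 1$ directly. The main obstacle I expect is bookkeeping the telescoping around the two partial paths at the walk's endpoints; once the identity $\indeg(P_{j+1}) = \outdeg(P_j)$ is set up, the rest is routine algebra.
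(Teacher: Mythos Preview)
Your proposal is correct and follows essentially the same route as the paper: decompose the walk into the suspended paths it visits, use the telescoping identity $\indeg(P_{j+1})=\outdeg(P_j)$ to pair the factors, apply the suspended path condition to replace each $\outdeg(P_j)\indeg(P_j)$ by $\Lambda^{2|P_j|}$, and bound the leftover boundary terms by a constant depending only on $G$. The paper writes the partial pieces at the two ends as separate sub-paths $P_0,P_s$ and arrives at $f(\omega)=\outdeg(P_0)^{1/2}\indeg(P_s)^{1/2}\Lambda^{\ell-|P_0|-|P_s|}$, while you keep the full suspended paths $P_1,P_k$ and obtain $S=\Lambda^{L}\sqrt{\outdeg(P_{k-1})/\indeg(P_1)}$; these are the same identity up to reindexing.
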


\begin{proof}
    Given a graph $G$ satisfying the suspended path condition and some non-backtracking walk $\omega = (e_0,\ldots,e_\ell)\in \Omega_\ell$, 
    let $i_1 < \cdots < i_s$ be the set of indices $i<\ell$ with $\outdeg(e_i) > 1$.
    Then $\omega$ can be expressed as the concatenation $P_0 P_1 \cdots P_s$, where $P_j = (e_{i_j+1},e_{i_j+2},\ldots,e_{i_{j+1}})$ for $j=1,\ldots,s-1$ are suspended paths, 
    $P_0 = (e_0,\ldots,e_{i_1})$ and $P_s = (e_{i_s+1},\ldots,e_\ell)$.
    Let $f(\omega)$ denote the product of the edge out-degrees along $\omega$.
    Then:
    \begin{eqnarray}
        f(\omega) 
        &=& \prod_{i=0}^{\ell-1} \outdeg(e_i) 
         =  \prod_{j=0}^{s-1} \outdeg(P_j)                                \nonumber \\
        &=& \outdeg(P_0)^\frac{1}{2} \left[ \prod_{j=1}^{s-1} \indeg(P_j)^\frac{1}{2} \outdeg(P_j)^\frac{1}{2} \right] \indeg(P_s)^\frac{1}{2}                               \nonumber \\
        &=& \outdeg(P_0)^\frac{1}{2} \indeg(P_s)^\frac{1}{2} \cdot \Lambda(G)^{\ell-|P_0|-|P_s|}
        = \Theta(1) \cdot \Lambda(G)^{\ell},                              \label{eq:outdegree_prod_approx}
    \end{eqnarray}
    where $\Theta(1)$ denotes a factor that is upper and lower bounded by positive constants that depend on $G$ but not on $\ell$ or $\omega$.
    It follows that:
    \begin{eqnarray*}
         \rho(G) 
         &=& \rho(B)  
         = \lim_{\ell \rightarrow \infty} \left( \nu_s B^\ell \overline{1} \right)^{1/\ell} 
         = \lim_{\ell \rightarrow \infty} \left(  \sum_{\omega \in \Omega_\ell} \mu[\omega] \ f(\omega) \right)^{1/\ell} \\
         &=& \lim_{\ell \rightarrow \infty} \left(   \Theta(1) \cdot \sum_{\omega \in \Omega_\ell} \mu[\omega] \ \Lambda(G)^\ell \right)^{1/\ell} 
         = \Lambda(G),
    \end{eqnarray*}
    where the second equality uses Proposition~\ref{proposition:Gelfand}.
\end{proof}

\subsection{$\rho = \Lambda$ implies the cycle condition}\label{subsect:equality_implies_cycle}
The next step towards the proof of Theorem~\ref{theorem:main} is the following:
\begin{lemma}\label{lemma:rho_eq_Lambda_implies_cycle}
    If an NB-irreducible graph $G$
    satisfies $\rho(G) = \Lambda(G)$, then it satisfies the cycle condition~\eqref{eq:cyclecond}.
\end{lemma}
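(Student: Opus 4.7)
The plan is to read the cycle condition directly off the right Perron eigenvector of the non-backtracking matrix $B$. By Claim~\ref{claim:glover2021non}, $B$ is irreducible, so Perron--Frobenius supplies a positive eigenvector $\phi$ satisfying $\rho\,\phi(e) = \sum_{f:\,e\to f}\phi(f)$ for every $e \in \dirE$. The whole argument revolves around applying AM--GM to this identity and then exploiting the hypothesis $\rho = \Lambda$ to force equality everywhere.

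Concretely, I would first apply AM--GM to each Perron equation to obtain
\begin{equation*}
    \rho\,\phi(e) \;\geq\; \outdeg(e)\cdot\Bigl(\prod_{f:\,e\to f}\phi(f)\Bigr)^{1/\outdeg(e)},
\end{equation*}
with equality iff $\phi$ takes a single value on the successor set $\{f:e\to f\}$. Next, taking logarithms and averaging this inequality against the uniform stationary measure $\nu_s$, the identity $\nu_s\Pi = \nu_s$ causes the $\mathbb{E}_{\nu_s}[\log\phi]$ contributions on the two sides to cancel, leaving precisely $\log\rho \geq \log\Lambda$ — in essence a Perron-eigenvector proof of Corollary~\ref{cor:rho_ge_lambda}. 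The point of doing it this way is that the saturating hypothesis $\rho = \Lambda$ now forces equality in every individual AM--GM step (because $\nu_s$ is strictly positive on all of $\dirE$), so for every edge $e$ the eigenvector $\phi$ is constant on the successor set $\{f:e\to f\}$. Combining this local constancy with the Perron equation itself yields the deterministic transition rule $\phi(f) = (\rho/\outdeg(e))\,\phi(e)$ whenever $e \to f$.

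The conclusion is then a one-line telescoping argument: along any non-backtracking cycle $C = (e_0,\ldots,e_{k-1})$ with $e_k = e_0$, iterating the transition rule gives $\phi(e_0) = \phi(e_0)\cdot \rho^k/\prod_{e\in C}\outdeg(e)$, and dividing by $\phi(e_0)>0$ forces $\prod_{e\in C}\outdeg(e) = \rho^k = \Lambda^k$, which is exactly the cycle condition~\eqref{eq:cyclecond}. The step I expect to be most delicate is the equality analysis in the averaging: one has to verify carefully that the per-edge AM--GM slacks, after $\nu_s$-weighted summation, assemble exactly into $\log\rho - \log\Lambda$ with no residual terms, so that the single global equality $\rho = \Lambda$ is promoted to simultaneous equality at every edge. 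Once this local rigidity is established, the remainder of the proof is routine.
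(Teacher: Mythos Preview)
Your argument is correct and takes a genuinely different route from the paper. The paper proceeds via Nussbaum's log-convexity theorem for Perron eigenvalues: it interpolates entrywise between $\Pi_G$ and $B$ by $M_t$, shows $\rho(M_t)=\Lambda^t$ for all $t\in[0,1]$ by sandwiching the convexity upper bound $\rho(M_t)\le\rho(\Pi_G)^{1-t}\rho(B)^t$ against the lower bound of Lemma~\ref{lemma:ModifiedPiPerronLowerBound}, and then invokes Nussbaum's equality criterion to obtain a diagonal conjugacy $\Pi_G=kD^{-1}BD$; taking logs of this relation gives a potential $\varphi(e)=\log D_{e,e}$ whose increments telescope around any cycle. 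Your proof bypasses the Nussbaum black box entirely: by applying AM--GM to the right Perron eigenvector equation of $B$ and averaging against $\nu_s$, you reprove $\rho\ge\Lambda$ in a way that, under equality, forces $\phi(f)=(\rho/\outdeg(e))\,\phi(e)$ for every transition $e\to f$, so $\log\phi$ plays the role of the paper's $\varphi$. In effect you are proving from scratch the special case of Nussbaum's equality characterization that the paper quotes; what you buy is a self-contained, more elementary argument, while the paper's approach is shorter once the citation is accepted and incidentally pins down $\rho(M_t)$ along the whole interpolation. Your concern about the averaging step is unfounded: since $\nu_s\Pi_G=\nu_s$, the two $\mathbb{E}_{\nu_s}[\log\phi]$ terms cancel exactly, and the residual is precisely the $\nu_s$-weighted sum of the nonnegative AM--GM slacks, each of which must vanish.
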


In order to prove this lemma,
we would like to study the way the Perron eigenvalue changes from $\rho(\Pi_G)=1$ to $\rho(G)=\rho(B)$
as the matrix changes gradually from $\Pi_G$ to $B$, 
where $\Pi_G$ denotes the transition matrix of the NBRW Markov chain and $B$ is the non-backtracking adjacency matrix.
The following theorem gives a necessary and sufficient condition for strict log-convexity of $\rho(M_t)$ for $0 \leq t \leq 1$, 
where $M_t$ is an element-wise interpolation between $\Pi_G$ and $B$. The log-convexity property~\eqref{eq:Nussbaum_logconvexity} already appears in Kingman~\cite{kingman1961convexity}.

\begin{theorem}[Nussbaum~\cite{nussbaum1986convexity}, Corollary 1.2]\label{theorem:Nussbaum}
    For $0 \leq t \leq 1$, assume that $M_t$ is an irreducible non-negative matrix such that $(M_t)_{i,j} = (U_{i,j})^{1-t} (V_{i,j})^t$ for all $i,j$.
    Then:
    \begin{equation}\label{eq:Nussbaum_logconvexity}
        \rho(M_t) \leq \rho(U)^{1-t} \rho(V)^t,
    \end{equation}
    where equality holds for some $0 < t <1$ 
    if and only if there exists a positive diagonal matrix $D$ and a constant $k > 0$ such that
    \begin{equation}\label{eq:Nussbaum_condition}
        U = k D^{-1} V D.
    \end{equation}
    Furthermore, if \eqref{eq:Nussbaum_condition} holds then \eqref{eq:Nussbaum_logconvexity} holds for all $t$.
\end{theorem}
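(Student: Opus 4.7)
The plan is to prove Nussbaum's theorem in three parts. \emph{First}, establish the log-convexity inequality by expanding matrix powers as sums over walks. For any $n \geq 1$ and indices $i, j$,
\[
    (M_t^n)_{i,j} = \sum_{p=(i_0=i,\,i_1,\ldots,\,i_n=j)} \prod_{\ell=0}^{n-1}(M_t)_{i_\ell,i_{\ell+1}} = \sum_{p} w_U(p)^{1-t}\, w_V(p)^t,
\]
where $w_U(p), w_V(p)$ denote the $U$- and $V$-weights of the walk $p$. H\"older's inequality with conjugate exponents $1/(1-t), 1/t$ then yields $(M_t^n)_{i,j} \leq (U^n)_{i,j}^{1-t}\,(V^n)_{i,j}^{t}$ for every $n, i, j$. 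Taking $n$-th roots and invoking the standard limit $\rho(A) = \lim_n ((A^n)_{i,j})^{1/n}$ for irreducible non-negative $A$ produces $\rho(M_t) \leq \rho(U)^{1-t}\rho(V)^t$.

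\emph{Second}, the ``if'' direction together with the ``furthermore'' clause follows by a direct computation. If $U = k D^{-1} V D$, then
\[
(M_t)_{ij} = \bigl(k D_{ii}^{-1} V_{ij} D_{jj}\bigr)^{1-t} V_{ij}^t = k^{1-t}\, D_{ii}^{-(1-t)}\, V_{ij}\, D_{jj}^{1-t},
\]
so $M_t = k^{1-t} (D^{1-t})^{-1} V (D^{1-t})$ is a scalar multiple of a similarity of $V$. Hence $\rho(M_t) = k^{1-t}\rho(V)$ for every $t$, and since $\rho(U) = k\rho(V)$, this quantity equals $\rho(U)^{1-t}\rho(V)^{t}$, giving equality in \eqref{eq:Nussbaum_logconvexity} for every $t \in [0,1]$.

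\emph{Third}, and this is where the main obstacle lies, I must extract the multiplicative structure $U = kD^{-1}VD$ from the single scalar equality $\rho(M_t) = \rho(U)^{1-t}\rho(V)^t$ at some interior $t$. The plan is to gauge-transform to stochastic matrices. Let $y_U, y_V > 0$ be the right Perron eigenvectors of $U, V$, and set $Y_U, Y_V$ to be their diagonal matrices; then $P_U = \rho(U)^{-1} Y_U^{-1} U Y_U$ and $P_V = \rho(V)^{-1} Y_V^{-1} V Y_V$ are row-stochastic. Define $N_t$ by $(N_t)_{ij} = (P_U)_{ij}^{1-t}(P_V)_{ij}^{t}$; a direct computation shows $N_t = \tilde Y^{-1} M_t \tilde Y / [\rho(U)^{1-t}\rho(V)^t]$, where $\tilde Y_{ii} = (y_U)_i^{1-t}(y_V)_i^{t}$, so $\rho(N_t) = \rho(M_t)/[\rho(U)^{1-t}\rho(V)^t]$. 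Re-running the H\"older argument on $P_U, P_V$ shows each row of $N_t$ sums to at most $1$, hence $N_t$ is irreducible sub-stochastic with $\rho(N_t) \leq 1$. The equality hypothesis forces $\rho(N_t) = 1$. By the Perron-Frobenius principle that an irreducible non-negative $A$ satisfying $Ay \leq \lambda y$ with $y > 0$ and $Ay \neq \lambda y$ has $\rho(A) < \lambda$, applied with $y = \mathbf{1}$ and $\lambda = 1$, every row of $N_t$ must sum to exactly $1$. Row-sum equality is precisely H\"older equality in each row, and since $P_U, P_V$ restricted to row $i$ are probability distributions, this forces $(P_U)_{ij} = (P_V)_{ij}$ for every $(i,j)$ in the common support, i.e.\ $P_U = P_V$. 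Unwinding the gauge transform, $Y_U^{-1} U Y_U / \rho(U) = Y_V^{-1} V Y_V / \rho(V)$ rearranges to $U = [\rho(U)/\rho(V)] \, D^{-1} V D$ with $D = Y_V Y_U^{-1}$, giving the required identity with $k = \rho(U)/\rho(V)$.
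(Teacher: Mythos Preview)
The paper does not prove this theorem: it is quoted verbatim as Corollary~1.2 of Nussbaum~\cite{nussbaum1986convexity} and then used as a black box in the proof of Lemma~\ref{lemma:rho_eq_Lambda_implies_cycle}. So there is no ``paper's own proof'' to compare against; you have supplied a proof where the paper simply cites one.

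That said, your argument is essentially the standard one and is correct in outline. A small technical point in Part~1: the formula $\rho(A)=\lim_n\bigl((A^n)_{ij}\bigr)^{1/n}$ can fail for a fixed pair $(i,j)$ when $A$ is irreducible but periodic (e.g.\ $(A^n)_{ii}=0$ for $n$ not divisible by the period). This is easily patched: apply H\"older once more to the entrywise bound to get $\sum_{i,j}(M_t^n)_{ij}\le\bigl(\sum_{i,j}(U^n)_{ij}\bigr)^{1-t}\bigl(\sum_{i,j}(V^n)_{ij}\bigr)^{t}$, and then use Gelfand's formula $\rho(A)=\lim_n\|A^n\|^{1/n}$ with the entry-sum norm, which holds without any aperiodicity assumption. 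Part~2 is a clean direct computation. In Part~3 your gauge-to-stochastic reduction is exactly the right idea; note that irreducibility of $M_t$ forces $U$ and $V$ to be irreducible as well (their supports contain that of $M_t$), so the Perron eigenvectors $y_U,y_V$ exist and are strictly positive, and $N_t$ inherits irreducibility from $M_t$. The step from $\rho(N_t)=1$ with sub-stochastic rows to \emph{exact} stochasticity of every row is the key use of Perron--Frobenius, and the H\"older equality case then forces $P_U=P_V$ entrywise (not merely on the common support, since both rows are probability vectors and proportionality with equal sums means equality, including at indices where one side vanishes).
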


Let $M_t$ be the matrix defined by Theorem~\ref{theorem:Nussbaum} with $U = \Pi_G$ and $V = B$.
Then by the following lemma, the log-convexity condition \eqref{eq:Nussbaum_logconvexity} is an equality if $\rho=\Lambda$:

\begin{lemma}\label{lemma:Mt_eq_Lambda_t}
    Let $G$ be an NB-irreducible graph such that $\rho(G) = \Lambda(G)$. 
    Then  $\rho(M_t) = \Lambda(G)^t$ for all $0 \leq t \leq 1$.
\end{lemma}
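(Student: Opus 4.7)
The plan is to recognize $M_t$ as an instance of the modified NBRW Markov chain $\Pi_\beta$ from Lemma~\ref{lemma:ModifiedPiPerronLowerBound}, and then sandwich $\rho(M_t)$ between the lower bound coming from that lemma and the upper bound from Nussbaum's log-convexity Theorem~\ref{theorem:Nussbaum}.

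First, I would make the key identification: since $(\Pi_G)_{e,f} = 1/\outdeg(e)$ and $B_{e,f}=1$ whenever $e \to f$ (and both are zero otherwise), the interpolated entry is
\begin{equation*}
(M_t)_{e,f} = \left(\frac{1}{\outdeg(e)}\right)^{1-t} \cdot 1^t = \frac{1}{\outdeg(e)} \cdot \outdeg(e)^t,
\end{equation*}
so that $M_t = \Pi_\beta$ with $\beta(e) = \outdeg(e)^t$. This is the crucial observation that connects $M_t$ to the machinery already developed.

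Second, applying Lemma~\ref{lemma:ModifiedPiPerronLowerBound} with this choice of $\beta$ immediately gives
\begin{equation*}
\rho(M_t) \;\geq\; \Bigl(\prod_{e \in \dirE} \outdeg(e)^t\Bigr)^{1/|\dirE|} \;=\; \Lambda(G)^t,
\end{equation*}
where the last equality is the definition~\eqref{eq:Lambda} of $\Lambda(G)$.

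Third, for the matching upper bound I would invoke Theorem~\ref{theorem:Nussbaum} with $U=\Pi_G$ and $V=B$. Since $\Pi_G$ is a stochastic matrix, $\rho(U) = 1$, and by Lemma~\ref{lemma:rho_Perron}, $\rho(V) = \rho(B) = \rho(G)$. The log-convexity inequality~\eqref{eq:Nussbaum_logconvexity} then yields $\rho(M_t) \leq 1^{1-t} \cdot \rho(G)^t = \rho(G)^t$. Under the hypothesis $\rho(G) = \Lambda(G)$, this upper bound matches the lower bound $\Lambda(G)^t$, forcing equality $\rho(M_t) = \Lambda(G)^t$ for every $t \in [0,1]$.

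There is no real obstacle here: the statement is essentially a repackaging of two results already assembled in the preliminaries. The only thing one has to notice is the (very clean) coincidence that the geometric interpolation $M_t$ between $\Pi_G$ and $B$ coincides exactly with the tilted walk $\Pi_\beta$ for the natural choice $\beta(e) = \outdeg(e)^t$, at which point Lemma~\ref{lemma:ModifiedPiPerronLowerBound} and Nussbaum's theorem pinch $\rho(M_t)$ from below and above to the common value $\Lambda(G)^t$.
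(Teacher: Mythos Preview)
Your proposal is correct and follows essentially the same argument as the paper: identify $M_t$ with $\Pi_\beta$ for $\beta(e)=\outdeg(e)^t$, use Lemma~\ref{lemma:ModifiedPiPerronLowerBound} for the lower bound $\rho(M_t)\geq\Lambda(G)^t$, and use Nussbaum's log-convexity~\eqref{eq:Nussbaum_logconvexity} together with $\rho(\Pi_G)=1$ and $\rho(B)=\rho(G)=\Lambda(G)$ for the matching upper bound.
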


\begin{proof}
    We first note that the matrix $\Pi_\beta$ in Lemma~\ref{lemma:ModifiedPiPerronLowerBound}, where $\beta(e)=\outdeg(e)^t$, is equal to $M_t$.
    Therefore:
    \begin{equation*}
        \rho(M_t) = \rho(\Pi_\beta) \geq \Lambda(G)^t \, \mbox{ for all }0 \leq t \leq 1.
    \end{equation*}
    On the other hand, by \eqref{eq:Nussbaum_logconvexity} 
    \begin{equation*}
        \rho(M_t) \leq \rho(\Pi_G)^{1-t}\rho(B)^t = \Lambda(G)^t
    \end{equation*}
    where the equality holds since $\rho(\Pi_G) =1$ and 
    $\rho(B) = \Lambda(G)$ by our assumption.
\end{proof}

\noindent
We can now prove Lemma~\ref{lemma:rho_eq_Lambda_implies_cycle}:

\begin{proof}[Proof of Lemma~\ref{lemma:rho_eq_Lambda_implies_cycle}]
    Given an NB-irreducible graph $G$ such that  $\rho(G) = \Lambda(G)$, let the matrix $M_t$ be defined as above.
    Then $\rho(M_t)=\rho(G)^t$ for all $0 \leq t \leq 1$, by Lemma~\ref{lemma:Mt_eq_Lambda_t},
    and therefore by Theorem~\ref{theorem:Nussbaum} we have
    \begin{equation}\label{eq:P_diagsim_B}
    \Pi_G=kD^{-1}BD
    \end{equation}
    where $k>0$ is constant, and $D$ is a diagonal matrix with positive diagonal entries.
    Let $\lambda = \log k$ and define $\varphi:\dirE \to \R$ by 
    $\varphi(e) = \log D_{e,e} $.

    For any pair of edges $e,f$ such that $e \rightarrow f$, we have $B_{e,f} = 1$  and
    $\Pi _{e,f} = \outdeg(e)^{-1} = \indeg(f)^{-1}$. 
    Taking $\log$ of the $e,f$ entry of equation \eqref{eq:P_diagsim_B} yields:
    \begin{equation}\label{eq:Nsbm_ef}
        -\log\outdeg(e) = \lambda -\varphi(e)+\varphi(f).
    \end{equation}
    Summing equation \eqref{eq:Nsbm_ef} over all $e \rightarrow f$ pairs with weight $\outdeg(e)^{-1}$
    yields:
    \begin{equation}\label{eq:sum_NsbmoverEdges}
        -|\dirE| \log \Lambda = |\dirE | \cdot\lambda -\sum_{e}\varphi(e)+\sum _{f}\varphi(f) 
        = |\dirE | \cdot\lambda.
    \end{equation}
    Hence,
    \begin{equation}
        \lambda = -\log \Lambda.
    \end{equation}
    It follows that for any cycle $C=(e_0,e_1,\ldots,e_{\ell}=e_0)$ in $G$, the sum of \eqref{eq:Nsbm_ef} 
    over the edge pairs $e_j\to  e_{j+1}$ for $j=0,\ldots,\ell-1$, yields: 
    \begin{equation}\label{eq:cycle_condition_result}
        \sum_{e\in C} \log\outdeg(e) = |C| \log\Lambda, 
    \end{equation}
    where the contribution of $ \varphi(f)-\varphi(e)$ cancels out.
    This concludes the proof of the lemma since \eqref{eq:cycle_condition_result} is just the cycle condition~\eqref{eq:cyclecond}.
\end{proof}

\subsection{Cycle condition implies the suspended path condition}\label{subsect:cycle_implies_suspended}

The last stage for completing the proof of Theorem~\ref{theorem:main}
is to prove that the cycle condition implies the suspended path condition.
Before we continue, observe that given an NB-irreducible graph $G$, the suspended path condition is equivalent to the condition 
that $g(P) = \Lambda(G)$ for all suspended paths $P$,
where $g:\mathcal{S}\to\R$ is defined by:
\begin{equation}\label{eq:g_definition}
    g(P) = \left[\outdeg(P) \cdot \indeg(P)\right]^\frac{1}{2|P|}.
\end{equation}

The proof relies on two lemmas. Lemma~\ref{lemma:geometric_mean_of_g_is_Lambda} establishes that $\Lambda$ is a weighted geometric average of $g(P)$.  Lemma~\ref{lemma:find_an_improving_cycle} guarantees that given an edge function $f$ whose value on some suspended path is smaller than the global average, 
implies the existence of a cycle on which the average of $f$ is strictly larger than the global average.

\begin{lemma}\label{lemma:geometric_mean_of_g_is_Lambda}
    For an NB-irreducible graph $G$ and the function $g$ defined by \eqref{eq:g_definition}
    we have $\Lambda(G) = \prod_{P \in \mathcal {S}} {g(P)} ^ \frac{|P|}{|\dirE|}$. 
\end{lemma}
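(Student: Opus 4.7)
The plan is to expand the right-hand side by decomposing the edge set along the suspended-path partition, and then to handle the asymmetry between $\outdeg$ and $\indeg$ via the reversal involution on $\mathcal{S}$.

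First I would start from the edge-product formula $\Lambda(G) = \bigl(\prod_{e \in \dirE} \outdeg(e)\bigr)^{1/|\dirE|}$ given in \eqref{eq:Lambda}. Since the suspended paths partition $\dirE$, and since along any suspended path $P = (e_0, \ldots, e_{|P|-1})$ all internal edges satisfy $\outdeg(e_i) = 1$ while only the last edge contributes $\outdeg(e_{|P|-1}) = \outdeg(P)$, the product telescopes to
\begin{equation*}
    \prod_{e \in \dirE} \outdeg(e) = \prod_{P \in \mathcal{S}} \outdeg(P).
\end{equation*}

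Next I would introduce the reversal map $\tau: \mathcal{S} \to \mathcal{S}$ sending $P = (e_0, \ldots, e_{|P|-1})$ to $\tau(P) = (e'_{|P|-1}, \ldots, e'_0)$. A direct check on the defining conditions of a suspended path (both endpoints have degree $>2$, all internal vertices have degree $2$) shows that $\tau(P) \in \mathcal{S}$, that $\tau$ is an involution, and that $\outdeg(\tau(P)) = \indeg(P)$. Reindexing the product by $\tau$ therefore yields $\prod_P \outdeg(P) = \prod_P \indeg(P)$, which is equivalent to
\begin{equation*}
    \prod_{P \in \mathcal{S}} \outdeg(P) = \prod_{P \in \mathcal{S}} \bigl[\outdeg(P) \cdot \indeg(P)\bigr]^{1/2}.
\end{equation*}

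Combining the two identities and recognizing that $[\outdeg(P) \cdot \indeg(P)]^{1/2} = g(P)^{|P|}$ gives $\Lambda(G)^{|\dirE|} = \prod_{P \in \mathcal{S}} g(P)^{|P|}$, and taking the $|\dirE|$-th root yields the claim. There is no real obstacle here; the one point requiring care is verifying that $\tau$ maps $\mathcal{S}$ bijectively to itself, but this follows immediately from the symmetry between the defining conditions for $\outdeg(e_i)$ and $\indeg(e_i)$ along a suspended path.
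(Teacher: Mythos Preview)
Your proof is correct and follows essentially the same route as the paper's own argument: both pass from $\prod_e \outdeg(e)$ to $\prod_P \outdeg(P)$ via the suspended-path partition, then symmetrize to $\prod_P [\outdeg(P)\indeg(P)]^{1/2}$ before identifying $g(P)^{|P|}$. The only difference is that the paper leaves the symmetrization step implicit, whereas you spell it out via the reversal involution $\tau$ on $\mathcal{S}$.
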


\begin{proof}
    For any NB-irreducible graph $G$ we have:
    \begin{eqnarray*}
        \Lambda(G) 
            &=& \prod_{e \in \dirE} {\outdeg(e)}^\frac{1}{|\dirE|}
             =  \prod_{P \in \mathcal {S}} {\outdeg(P)}^\frac{1}{|\dirE|}\\
            &=& \prod_{P \in \mathcal {S}} \Bigl[ \outdeg(P)\indeg(P) \Bigr] ^ \frac{1}{2|\dirE|}
             =  \prod_{P \in \mathcal {S}} {g(P)} ^ \frac{|P|}{|\dirE|}. 
    \end{eqnarray*}
\end{proof}

\begin{remark}\label{remark:realxedSuspended}
    Lemma~\ref{lemma:geometric_mean_of_g_is_Lambda} states that $\Lambda(G)$ 
    is a weighted geometric average of $g(P)$.
    Therefore the suspended path condition can be relaxed to the existence of some constant
    $\alpha$ such that $g(P)=\alpha$ for every $P\in \mathcal {S}$.
    If the relaxed condition holds, then necessarily $\alpha=\Lambda(G)$.
\end{remark}

The next lemma requires working with undirected edges. Since the set of directed suspended paths $\mathcal{S}(G)$ is closed under path inversion, we define the set of {\em undirected suspended paths} $\mathcal{S}_U(G)$ as the set of equivalence classes of $\mathcal{S}(G)$ under the relation $P \sim P^{-1}$. Consequently, $\mathcal{S}_U(G)$ forms a partition of the undirected edges of $G$.

\begin{lemma}\label{lemma:find_an_improving_cycle}
    Let $G$ be an NB-irreducible graph, $\mu$ be a probability measure on the undirected edges of $G$, and $f:E(G)\to\R$ be a real valued function.
    Then there exists an undirected cycle $C$ in $G$ such that $\E_\mu[f|C] \geq \E_\mu[f]$.
    Furthermore, if $G$ contains an undirected suspended path $P$ satisfying $\E_\mu[f|P] < \E_\mu[f]$,
    there exists a cycle $C$ such that $\E_\mu[f|C] > \E_\mu[f]$.
\end{lemma}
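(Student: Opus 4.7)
The plan is to linearize by setting $\eta(e):=\mu(e)(f(e)-\E_\mu[f])$, so that $\sum_{e\in\dirE}\eta(e)=0$ and the desired inequality $\E_\mu[f|C]\geq\E_\mu[f]$ becomes $\sum_{e\in C}\eta(e)\geq 0$; likewise the hypothesis on the suspended path becomes $\phi(P^*):=\sum_{e\in P^*}\eta(e)<0$. The task is therefore to show that the maximum-mean NB-cycle value of $\eta$ is nonnegative, and strictly positive whenever some suspended path has $\phi<0$.

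For the weak bound I would argue by contradiction using max-mean-cycle LP duality. If every NB-cycle had $\sum_C \eta<0$, there would exist a potential $\psi:\dirE\to\R$ with $\eta(e)<\psi(f)-\psi(e)$ for every NB-transition $e\to f$. Taking the expectation of this pointwise inequality under the joint stationary distribution $\nu_s(e)\pi_G(e,f)$ of the NBRW, the left side is $\sum_e\nu_s(e)\eta(e)=0$ because $\nu_s$ is uniform on $\dirE$ and $\sum_e\eta(e)=0$, and the right side telescopes to $0$ by stationarity of $\nu_s$ under $\Pi_G$. All weights being strictly positive by NB-irreducibility (Claim~\ref{claim:glover2021non}), summing the strict pointwise inequality yields $0<0$, the sought contradiction.

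For the strict refinement I would run the argument with $\leq$ in place of $<$: assuming every cycle satisfies $\sum_C\eta\leq 0$ forces $\eta(e)=\psi(f)-\psi(e)$ at every NB-transition, because the weighted sum collapses to $0\leq 0$ and all weights are strictly positive. At any vertex of degree at least three this equality forces $\psi$ to agree on all outgoing edges, defining a potential $\Psi$ on the degree-$\geq 3$ vertices of $G$; telescoping inside a suspended path $P$ (whose internal vertices have degree exactly two) then gives $\phi(P)=\Psi(v_P)-\Psi(u_P)$, where $u_P$ and $v_P$ are the two endpoints of $P$. The hypothesis $\phi(P^*)<0$ therefore translates to $\Psi(v_{P^*})<\Psi(u_{P^*})$.

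The hardest step will be closing the contradiction from this potential identity. It is automatically compatible with $\sum_P\phi(P)=0$ and with every NB-cycle in $G$ telescoping to $\phi$-sum zero, so additional structural input is essential. In the setting where the lemma is actually invoked — with $\mu=\nu_s$ and $f$ symmetric under edge reversal, so $\eta(e)=\eta(e')$ — applying the identity to both $e$ and $e'$ yields $\Psi(h(e))=\Psi(t(e))$ on every edge, hence $\Psi$ is constant by connectivity of $G$, whence $\eta\equiv 0$ and $\phi\equiv 0$, contradicting $\phi(P^*)<0$. Extending this to the full generality stated may instead require a direct cycle-construction argument that combines $P^*$ with a suspended path $P^+$ satisfying $\phi(P^+)>0$ (which must exist since $\sum_P\phi(P)=0$), using strong connectivity of the NBRW chain to close a cycle through $P^+$ with strictly positive total.
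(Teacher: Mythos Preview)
Your approach via potentials and max-mean-cycle duality is genuinely different from the paper's. The paper runs a greedy peeling: starting from $E_0=\dirE$, at each step it deletes a suspended path $P_i$ of the current graph $G_i$ with $\E_\mu[f\mid P_i]\le\E_\mu[f\mid E_i]$ (one exists since the suspended paths partition the edge set), passes to a connected component of the remainder with at-least-current-average value, and stops when a bare cycle is reached; for the strict part it deletes the hypothesised below-average $P$ first.

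Your argument for the weak inequality is correct and clean. The existence of a strict potential $\psi$ when every directed cycle in the (strongly connected) non-backtracking digraph has negative $\eta$-sum is standard, and averaging $\eta(e)<\psi(f)-\psi(e)$ against the strictly positive stationary two-step law $\nu_s(e)\pi_G(e,f)$ gives $0<0$ exactly as you say.

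For the strict refinement, the gap you flag is real --- and it is not merely a gap in your argument but in the statement itself. Take $G=K_4$, $\mu=\nu_s$, and $f(u,v)=\Psi(v)-\Psi(u)$ for any non-constant $\Psi:V\to\R$. Then $\E_\mu[f]=0$, every suspended path is a single directed edge, some such edge has $f<0$, yet $\sum_{e\in C}\eta(e)$ telescopes to $0$ on \emph{every} non-backtracking cycle, so no cycle achieves strict inequality. The paper's peeling proof elides a directed/undirected distinction at exactly this point (it removes a directed $P_i$ but then speaks of ``the graph induced by $E_i\setminus P_i$'' and its minimum degree); read carefully, that step only works if one removes $P_i\cup P_i'$ together, and the averaging then goes through precisely when $\eta(e)=\eta(e')$. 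This symmetry holds in the paper's sole application of the lemma ($\mu=\nu_s$ and $f(e)=\log g(P_e)=f(e')$), and your symmetric-case argument handles it correctly: from $\eta(e)=\psi(f)-\psi(e)$ you get $\phi(P)=\Psi(v_P)-\Psi(u_P)$, and then $\phi(P)=\phi(P')=-\phi(P)$ forces $\phi\equiv 0$, contradicting $\phi(P^*)<0$.

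So your weak part is a valid alternative proof, your strict part is correct in the case that matters, and you were right to be suspicious of the general formulation. The paper's peeling is more elementary and constructive; your route has the merit of explaining \emph{why} the obstruction is a coboundary, which is what makes the symmetric case transparent.
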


\begin{proof}
    Given an NB-irreducible graph $G$, measure $\mu$ and function $f$ as above, we construct a sequence of undirected edge subsets $E_0=E(G), E_1, \ldots, E_t=C$, 
    where each $E_{i+1}$ is a strict subset of $E_i$ and $\E_\mu[f|E_{i+1}]  \geq \E_\mu[f|E_i]$.
    For all $i < t$, we maintain the property that the subgraph $G_i$ induced by $E_i$ is NB-irreducible.
    
    At step $i$, assume $G_i$ is NB-irreducible. 
    Since $\mathcal{S}_U(G_i)$ partitions $E_i$, there must exist at least one suspended path $P_i \in \mathcal{S}_U(G_i)$ such that $\E_\mu[f|P_i] \leq \E_\mu[f|E_i]$. 
    Let $E'_i = E_i \setminus P_i$ and $G'_i$ be the graph induced by $E'_i$.
    Clearly, $\E_\mu[f|E'_i] \geq \E_\mu[f|E_i]$ and the minimal degree of $G'_i$ is at least two.
    However, $G'_i$ is not necessarily connected. 
    We define $E_{i+1}$ as the edges of a connected component of $G'_i$ such that $\E_\mu[f|E_{i+1}] \geq \E_\mu[f|E'_i] \geq \E_\mu[f|E_i]$.
    Therefore, with the edge set $E_{i+1}$ the graph $G_{i+1}$ is guaranteed to be connected with minimal degree at least two.
    If $G_{i+1}$ is not a cycle, then it is NB-irreducible and we may continue the process.
    Otherwise $E_{i+1}$ is a cycle and the process ends.

    Finally, if there exists $P \in \mathcal{S}_U(G)$ such that $\E_\mu[f|P] < \E_\mu[f]$, then for the first step we have $E'_0 = E_0 \setminus P$ and $\E_\mu[f|E'_0] > \E_\mu[f|E_0]$. 
    The strict inequality is preserved throughout the remaining iterations, yielding $\E_\mu[f|C] > \E_\mu[f]$.
\end{proof}

\begin{lemma}
    Let $G$ be an NB-irreducible graph.
    If the cycle condition holds for $G$,
    then the suspended path condition holds for $G$.
\end{lemma}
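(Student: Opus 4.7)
The plan is to derive a contradiction by feeding Lemma~\ref{lemma:find_an_improving_cycle} a carefully symmetrized edge function: if some suspended path violates the condition, then some non-backtracking cycle must violate the cycle condition. Concretely, I take $\mu$ to be the uniform measure on $\dirE$ and set $f(e)=\tfrac{1}{2}\bigl(\log\outdeg(e)+\log\indeg(e)\bigr)$.

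The first step is to verify three identities. Using the definition of $\Lambda$ together with the reversal bijection $e\mapsto e'$, which swaps in- and out-degree, I expect $\E_\mu[f]=\log\Lambda(G)$. On a suspended path $P$, all internal edges have in- and out-degree one, so only the two endpoints contribute and the computation should collapse to $\E_\mu[f\mid P]=\log g(P)$ exactly. The less immediate identity is on cycles: for a non-backtracking cycle $C=(e_0,\ldots,e_{|C|-1})$, the transitions $e_i\to e_{i+1}$ force $\indeg(e_{i+1})=\outdeg(e_i)$, so $\sum_{e\in C}\log\indeg(e)$ and $\sum_{e\in C}\log\outdeg(e)$ are cyclic shifts of one another and hence equal. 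Thus $\E_\mu[f\mid C]=\frac{1}{|C|}\sum_{e\in C}\log\outdeg(e)$, which by the cycle condition equals $\log\Lambda$. I view this symmetry as the conceptual crux of the argument: it is precisely what lets $f$ detect both the $\outdeg(P)$ and $\indeg(P)$ factors on suspended paths while still agreeing with $\log\outdeg$ on cycles.

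With those three identities in hand, suppose for contradiction that the suspended path condition fails. Then $g(P)\neq\Lambda$ for some $P\in\mathcal{S}$, and since by Lemma~\ref{lemma:geometric_mean_of_g_is_Lambda} the value $\log\Lambda$ is a weighted average of the values $\log g(P)$, there must exist some $P^*\in\mathcal{S}$ with $g(P^*)<\Lambda$, i.e.\ $\E_\mu[f\mid P^*]<\E_\mu[f]$. Lemma~\ref{lemma:find_an_improving_cycle} then produces a non-backtracking cycle $C$ with $\E_\mu[f\mid C]>\E_\mu[f]=\log\Lambda$, contradicting the cycle identity established above. Hence $g(P)=\Lambda$ for all $P\in\mathcal{S}$, which is the suspended path condition. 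The main obstacle I anticipate is pinning down the right definition of $f$ so that both the conditional-expectation-on-$P$ identity and the cycle-sum symmetry hold simultaneously; once $f$ is chosen, the rest of the argument is a direct invocation of the two preceding lemmas.
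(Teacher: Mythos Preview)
Your proof is correct and follows essentially the same route as the paper: argue by contrapositive, use Lemma~\ref{lemma:geometric_mean_of_g_is_Lambda} to find a suspended path $P^*$ with $g(P^*)<\Lambda$, feed this into Lemma~\ref{lemma:find_an_improving_cycle}, and conclude that the resulting cycle violates the cycle condition. The only difference is cosmetic: the paper takes $f(e)=\log g(P)$ for the unique $P\in\mathcal{S}$ containing $e$, whereas you take $f(e)=\tfrac12(\log\outdeg(e)+\log\indeg(e))$. Both choices satisfy $\E_\mu[f]=\log\Lambda$ and $\E_\mu[f\mid P]=\log g(P)$; your cyclic-shift observation $\indeg(e_{i+1})=\outdeg(e_i)$ makes the identity $\E_\mu[f\mid C]=\frac{1}{|C|}\sum_{e\in C}\log\outdeg(e)$ immediate, while in the paper's version this step (left implicit there) requires first decomposing $C$ into suspended paths of $G$.
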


\begin{proof}
    Assume that the suspended path condition \eqref{eq:2-pathcond} does not hold for $G$.
    Then by Lemma~\ref{lemma:geometric_mean_of_g_is_Lambda}, 
    there must be at least one suspended path $P_0 \in \mathcal{S}(G)$ such that $g(P_0)<\Lambda(G)$. 
    Let $P^u_0 \in \mathcal{S}_U(G)$ be the undirected path corresponding to $P_0$. 
    
    As $g$ is invariant under path inversion, we can regard $g$ as a function on undirected suspended paths, $\mathcal{S}_U(G)$. 
    Define the function $f:E(G)\to\R$ by $f(e)=\log g(P)$ where $P \in \mathcal{S}_U(G)$ is the unique suspended path with $e \in P$.
    As the stationary distribution $\nu_s$ is also invariant under edge inversion, we define $\mu$ as the induced measure on the undirected edges $E(G)$.
    We have 
    \begin{equation*}
        \E_\mu[f] = \frac{1}{|\dirE|} \sum_{P \in \mathcal{S}(G)} \sum_{e \in P} \log g(P) = \frac{1}{|\dirE|} \sum_{P \in \mathcal{S}(G)} |P| \log g(P) = \log \Lambda(G),
    \end{equation*}
    where the last step follows by Lemma~\ref{lemma:geometric_mean_of_g_is_Lambda}.
    Also, we have 
    \begin{align*}
        \E_\mu[f|P^u_0] 
        &= \frac{1}{|\dirE|} \sum_{e \in P_0} \log g(P_0) / \Pr\nolimits_{\nu_s}[P_0] 
        = \frac{|P_0|}{|\dirE|} \log g(P_0) \frac{|\dirE|}{|P_0|} \\
        &= \log g(P_0) 
        < \Lambda(G) 
        = \E_\mu[f].        
    \end{align*}
    Applying Lemma~\ref{lemma:find_an_improving_cycle} with $P^u_0$, $f$ and $\mu$ implies that there is a cycle $C$ such that:
    \begin{equation*}
          \E_\mu[f|C] = \frac{1}{|C|} \sum_{e \in C} f(e) > \E_\mu[f] = \log \Lambda(G),
    \end{equation*}
    contradicting the cycle condition.
\end{proof}

\section{Graphs satisfying $\rho = \Lambda$}\label{sect:graphs_with_rho_equal_lambda}

In this Section we consider some graphs and graph families with $\rho = \Lambda$.
\begin{claim}\label{claim:regular_biregular}
    If $G$ is an NB-irreducible graph $G$ with 
    $\rho(G) = \Lambda(G)$, and 
    $G$ has no degree two vertices, 
    then $G$ is either a regular graph, or 
    a bipartite bi-regular graph.
\end{claim}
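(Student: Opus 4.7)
The plan is to reduce the statement to the suspended path condition via Theorem~\ref{theorem:main}, then observe that the absence of degree-two vertices forces suspended paths to be extremely simple. Specifically, if every vertex has degree at least three, then the definition of a suspended path requires internal vertices of degree two, so no suspended path can have length greater than one, and hence $\mathcal{S}(G)$ is exactly the set of directed edges of $G$. Since $\rho(G) = \Lambda(G)$, Theorem~\ref{theorem:main} gives the suspended path condition, which in this setting reads $(\deg(u)-1)(\deg(v)-1) = \Lambda(G)^2$ for every edge $\{u,v\}$ of $G$.

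Next I would work with the vertex function $\varphi(v) = \deg(v) - 1$, so that $\varphi(u)\varphi(v) = \Lambda^2$ on every edge. The immediate consequence is that all neighbors of any vertex $v$ share the common value $\Lambda^2/\varphi(v)$. Fixing a vertex $v_0$ and setting $a = \varphi(v_0)$ and $b = \Lambda^2/a$, an induction on graph distance (using connectivity of $G$) shows that every vertex at even distance from $v_0$ has $\varphi$-value $a$ and every vertex at odd distance has $\varphi$-value $b$. In particular $\varphi$ takes at most two values on $V(G)$.

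To conclude, split into two cases. If $a = b$, then $a = \Lambda$ and every vertex has the same degree $\Lambda+1$, so $G$ is regular. If $a \neq b$, I would verify bipartiteness by tracing $\varphi$ along an arbitrary cycle $v_0, v_1, \ldots, v_{k-1}, v_0$: the relation $\varphi(v_{i+1}) = \Lambda^2/\varphi(v_i)$ forces $\varphi$ to alternate between $a$ and $b$, and closing up the cycle consistently at $v_k = v_0$ requires $k$ to be even. Thus $G$ contains no odd cycle and is bipartite, with one side consisting of vertices of degree $a+1$ and the other of vertices of degree $b+1$, i.e., bipartite bi-regular.

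I do not anticipate any serious obstacle; the only point that needs a moment of care is the step showing that suspended paths of length greater than one are ruled out by the hypothesis (a direct consequence of the degree conditions $\outdeg(e_i) = \indeg(e_i) = 1$ on internal edges of a suspended path, which would require internal vertices of degree two). Everything else is linear algebra on the single multiplicative constraint $\varphi(u)\varphi(v) = \Lambda^2$.
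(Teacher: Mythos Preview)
Your proposal is correct and follows essentially the same approach as the paper: both invoke Theorem~\ref{theorem:main} to obtain the suspended path condition, observe that the absence of degree-two vertices forces all suspended paths to have length one so that $(\deg(u)-1)(\deg(v)-1)$ is constant on edges, and then conclude regularity or bipartite bi-regularity from connectedness. Your write-up is simply more explicit about the final step (the alternation-along-cycles argument for bipartiteness), which the paper leaves as a one-line remark.
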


\begin{proof}
        If $G$ has no degree two vertices then all suspended paths have length one.
        Therefore, for any edge $e \in \dirE$ the product $\outdeg(e) \cdot \indeg(e)$ is fixed. 
        This implies that for any vertex $v$, all its neighbors must have the same degree. 
        As $G$ is connected, $G$ must be either regular or bipartite bi-regular.
\end{proof}

\begin{claim}\label{claim:subdivision}
    For any fixed $m$, if $G$ is an NB-irreducible graph with 
    $\rho(G)=\Lambda(G)$, and $G^m$ is obtained by replacing each edge
    in $G$ by a length $m$ path, then $\rho(G^m)=\Lambda(G^m)=\sqrt[m]{\rho(G)}$.
\end{claim}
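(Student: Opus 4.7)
The plan is to apply Theorem~\ref{theorem:main} in both directions, using the fact that subdivision establishes a clean correspondence between suspended paths of $G$ and $G^m$ that preserves all the relevant quantities up to scaling by $m$.

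First, I would observe that $G^m$ is NB-irreducible: replacing each edge by a length $m$ path preserves connectivity, the newly introduced internal vertices all have degree exactly $2$ (so $\mindeg(G^m) \geq 2$), and the maximum degree is unchanged from $G$ (so $\maxdeg(G^m) > 2$). Next, I would compute $\Lambda(G^m)$ directly from the formula in \eqref{eq:Lambda}. The subdivision introduces $(m-1)|E(G)|$ new vertices of degree $2$, each contributing a factor of $(2-1)^{\deg/(2|E(G^m)|)} = 1$ to the product. The original vertices contribute the same numerator $(\deg(v)-1)^{\deg(v)}$ as in $\Lambda(G)$, but now with denominator $2|E(G^m)| = 2m|E(G)|$ in the exponent. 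This yields $\Lambda(G^m) = \Lambda(G)^{1/m}$.

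The core step is to transfer the suspended path condition from $G$ to $G^m$. Every suspended path $P'$ of $G^m$ is obtained by taking a suspended path $P$ of $G$ and replacing each of its edges by the corresponding length $m$ path; this is a bijection $\mathcal{S}(G) \leftrightarrow \mathcal{S}(G^m)$ with $|P'| = m|P|$, $\outdeg(P') = \outdeg(P)$, and $\indeg(P') = \indeg(P)$, since the endpoints of $P'$ are exactly the endpoints of $P$ (the only vertices of degree $>2$ in $G^m$). Because $\rho(G) = \Lambda(G)$, Theorem~\ref{theorem:main} gives $\outdeg(P)\cdot\indeg(P) = \Lambda(G)^{2|P|}$ for every $P \in \mathcal{S}(G)$. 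Combining with $\Lambda(G^m) = \Lambda(G)^{1/m}$,
\begin{equation*}
\outdeg(P')\cdot\indeg(P') = \Lambda(G)^{2|P|} = \Lambda(G^m)^{2m|P|} = \Lambda(G^m)^{2|P'|},
\end{equation*}
so $G^m$ satisfies the suspended path condition. Applying Theorem~\ref{theorem:main} in the reverse direction yields $\rho(G^m) = \Lambda(G^m) = \Lambda(G)^{1/m} = \rho(G)^{1/m}$.

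The only slightly delicate point is making sure the bijection $\mathcal{S}(G)\leftrightarrow \mathcal{S}(G^m)$ behaves correctly when $G$ already has vertices of degree $2$. In that case the suspended paths of $G$ themselves have length $>1$, and after subdivision each still produces a single suspended path of $G^m$ of length $m|P|$ whose endpoints are the same high-degree vertices; the internal vertices (both original degree-$2$ vertices of $G$ and newly added ones) all have degree $2$ in $G^m$, so no new suspended paths are created. Once this is clear, the rest is just the bookkeeping carried out above.
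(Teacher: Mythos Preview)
The paper actually states Claim~\ref{claim:subdivision} without proof, so there is no argument to compare against directly. Your proof is correct: the computation $\Lambda(G^m)=\Lambda(G)^{1/m}$ is right, the bijection $\mathcal{S}(G)\leftrightarrow\mathcal{S}(G^m)$ with $|P'|=m|P|$ and unchanged endpoint degrees is exactly what happens under subdivision (including the case where $G$ already has degree-$2$ vertices), and the chain of equalities verifying the suspended path condition for $G^m$ is clean. Invoking Theorem~\ref{theorem:main} twice then finishes the job.

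It is worth noting that a more elementary route is available which avoids Theorem~\ref{theorem:main} entirely. The universal cover of $G^m$ is the $m$-subdivision of the universal cover of $G$, so $|\mathcal{B}_{\widetilde{G^m},mr}(v)|=|\mathcal{B}_{\tilde G,r}(v)|+O(1)$ and hence $\rho(G^m)=\rho(G)^{1/m}$ directly from the definition (equivalently, $B_{G^m}^m$ is conjugate to $B_G\oplus(\text{shift blocks})$, giving the same conclusion for the Perron eigenvalue). Combined with your computation $\Lambda(G^m)=\Lambda(G)^{1/m}$, the equality $\rho(G^m)=\Lambda(G^m)$ follows immediately from $\rho(G)=\Lambda(G)$. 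Your approach has the advantage of illustrating how the suspended path condition behaves under subdivision, which is thematically consistent with Section~\ref{sect:graphs_with_rho_equal_lambda}; the direct approach has the advantage of being self-contained and not relying on the main theorem.
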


\begin{proof}
    It is easy to verify that $G^m$ is NB-irreducible and that $\Lambda(G^m) = \sqrt[m]{\rho(G)}$.
    As any suspended path $P \in \mathcal{S}(G)$ corresponds to a suspended path $P^m \in \mathcal{S}(G^m)$, 
    where $\indeg(P^m) = \indeg(P)$, $\outdeg(P^m) = \outdeg(P)$ and $|P^m| = m|P|$, 
    it follows that the suspended path condition holds for $G^m$. Consequently, $\rho(G^m)=\Lambda(G^m)$.
\end{proof}

We now give some other examples for graphs satisfying the suspended path condition, such as the three graphs in Figure~\ref{fig:lambda_rho_counterexample}.
    Consider the wheel with spokes graph $W_n$. 
    This graph is obtained by adding a central vertex $v_0$ to the cycle $C_n$ and connecting $v_0$ to all the cycle vertices $v_1,\ldots,v_n$.
    The next step is to obtain the graph $W_{n,\ell_1,\ell_2}$ by replacing each of the cycle edges of $W_n$ by a length $\ell_1$ path, and each spoke edge, i.e. an edge between the central vertex $v_0$ to a cycle vertex, by a length $\ell_2$ path. 
    Then the graph $W_{n,\ell_1,\ell_2}$ has the following types of suspended paths:
    \begin{itemize}
        \item Cycle suspended paths $P_c$ of length $\ell_1$, with $\indeg(P_c)=\outdeg(P_c)=2$, implying that $g(P_c) = (2 \cdot 2)^{1/\ell_1}$.
        \item Spoke suspended paths $P_s$ of length $\ell_2$, oriented from $v_0$ with $\indeg(P_s)=n-1$ and $\outdeg(P_s)=2$, implying that $g(P_s) = g(P_s') = (2 \cdot (n-1))^{1/\ell_2}$.
    \end{itemize}
    Therefore, by Remark~\ref{remark:realxedSuspended}, $\rho(W_{n,\ell_1,\ell_2}) = \Lambda(W_{n,\ell_1,\ell_2})$ iff
    \begin{equation}\label{eq:spokes_graph_condition}
        (2 \cdot 2)^{1/\ell_1} = (2 \cdot (n-1))^{1/\ell_2}.
    \end{equation}
    Setting $n=2^k+1$, this equality is equivalent to $\ell_1(1+k) = 2 \ell_2$.
    We further require that $\ell_1$ and $\ell_2$ are relatively prime, otherwise the graph can be obtained from  a smaller graph by Claim~\ref{claim:subdivision}.
    It follows that for even $k$, one must have $\ell_1 = 2$, $\ell_2 = k+1$ and for odd $k$ one must have $\ell_1 = 1$ and $\ell_2 = (k+1)/2$.
    This yields an infinite family of graphs, which we denote by by $H_k$, all satisfying \eqref{eq:spokes_graph_condition} and hence $\rho(H_k)=\Lambda(H_k)$.
    For $k>1$, none of the $H_k$ graphs falls into the categories stated by Claims~\ref{claim:regular_biregular}, \ref{claim:subdivision}.
    The three graphs (a), (b) and (c) in Figure~\ref{fig:lambda_rho_counterexample} are respectively 
    $H_2 = W_{5,2,3}$, $H_3 = W_{9,1,2}$, $H_4 = W_{17,2,5}$.             

\section{Variance of the Number of Random Bits}\label{sect:variance}

\subsection{Variance Dichotomy for General Graphs}\label{subsect:dichotomy}

Consider the random variable $R_\ell$, 
counting the number of random bits used by a length $\ell$ NBRW on $G$, starting from the stationary distribution $\nu_s$. 
Formally, for $\omega = (e_0,e_1,\ldots,e_\ell) \in \Omega_\ell$ we define $R_\ell(\omega) = \sum_{i=0}^{\ell-1} \log_2 \outdeg(e_i)$.
The following theorem states that the variance of $R_\ell$ exhibits a dichotomy.

\begin{theorem}\label{theorem:variance_dichotomy}
    Given an NB-irreducible graph $G$, then:
    \begin{equation*}
        \var[R_\ell] = \begin{cases}
            O(1)     & \mbox{ if }\rho = \Lambda \\
            \Theta(\ell) & \mbox{ if }\rho > \Lambda.
        \end{cases}
    \end{equation*}
\end{theorem}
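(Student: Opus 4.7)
The plan is to split into the two cases of the dichotomy. When $\rho = \Lambda$, Theorem~\ref{theorem:main} gives the suspended path condition, which lets me run the calculation of equation~\eqref{eq:outdegree_prod_approx} \emph{pointwise} rather than only in expectation. Concretely, any $\omega \in \Omega_\ell$ decomposes as $P_0 P_1 \cdots P_s$ where $P_1, \ldots, P_{s-1}$ are full suspended paths and $P_0, P_s$ are partial; the condition $\outdeg(P)\indeg(P) = \Lambda^{2|P|}$ makes the telescoping in~\eqref{eq:outdegree_prod_approx} exact, yielding $\prod_i \outdeg(e_i) = \Theta(1) \cdot \Lambda^\ell$ with multiplicative constants that depend only on $G$ (via the maximum degree and longest suspended path). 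Taking $\log_2$ gives $R_\ell(\omega) = \ell \log_2 \Lambda + O(1)$ \emph{deterministically}, so $\var[R_\ell] = O(1)$.

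For $\rho > \Lambda$ I would combine a Poisson-equation / martingale decomposition with a contradiction driven by Theorem~\ref{theorem:main}. Let $\tilde f(e) := \log_2 \outdeg(e) - \log_2 \Lambda$; by Corollary~\ref{cor:Rl} (equivalently~\eqref{eq:Lambda}) we have $\E_{\nu_s}[\tilde f] = 0$, so irreducibility of $\Pi_G$ on the finite state space $\dirE$ guarantees a solution $h : \dirE \to \R$ of the Poisson equation $(I - \Pi_G) h = \tilde f$. Writing $\tilde f(e_i) = h(e_i) - (\Pi_G h)(e_i)$ and telescoping $h(e_i) - h(e_{i+1})$ yields
\begin{equation*}
    R_\ell - \ell \log_2 \Lambda \;=\; h(e_0) - h(e_\ell) + M_\ell,
    \qquad M_\ell := \sum_{i=0}^{\ell-1} \bigl[ h(e_{i+1}) - (\Pi_G h)(e_i) \bigr],
\end{equation*}
where $M_\ell$ is a martingale under the stationary NBRW law with orthogonal increments, so $\var[M_\ell] = \ell \, \sigma_h^2$ for $\sigma_h^2 := \E_{\nu_s}\bigl[\var[h(e_1) \mid e_0]\bigr] \geq 0$, while $\var[h(e_0) - h(e_\ell)] = O(1)$ since $h$ is bounded. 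A Cauchy--Schwarz bound on the cross-covariance then gives $\var[R_\ell] = \ell \, \sigma_h^2 + O(\sqrt{\ell})$, which immediately yields the upper bound $O(\ell)$ and reduces the lower bound to proving $\sigma_h^2 > 0$.

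To rule out $\sigma_h^2 = 0$ I would argue by contradiction. Vanishing of $\sigma_h^2$ means $h(f)$ takes a common value $c(e)$ over all $f$ with $e \to f$, so $\tilde f(e) = h(e) - c(e)$ for every $e$. Summing along a non-backtracking cycle $C = e_0 \to e_1 \to \cdots \to e_{|C|-1} \to e_0$ and using $c(e_i) = h(e_{i+1})$ makes $\sum_{e \in C} \tilde f(e) = \sum_i \bigl( h(e_i) - h(e_{i+1}) \bigr)$ telescope to zero, which is the cycle condition~\eqref{eq:cyclecond}; by Theorem~\ref{theorem:main} this forces $\rho = \Lambda$, contradicting the hypothesis.

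The main obstacle is precisely this last step: vanishing of the asymptotic martingale variance is strictly stronger than the Markov-chain coboundary relation $\tilde f = (I - \Pi_G) h$ that holds by construction, and its extra content is exactly that $h$ is constant on the successors of each state. Recognising this ``pointwise'' coboundary as the same hypothesis driving the cycle argument in Lemma~\ref{lemma:rho_eq_Lambda_implies_cycle} is what closes the loop with the main theorem; secondary care is needed to handle the $O(\sqrt{\ell})$ cross-term cleanly enough to conclude $\Theta(\ell)$ rather than just $\Omega(\ell)$.
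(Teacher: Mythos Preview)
Your $\rho=\Lambda$ case matches the paper exactly (Lemma~\ref{lemma:variance_if_rho_eq_Lambda}).

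For $\rho>\Lambda$ your route is correct and genuinely different from the paper's. The paper treats the two bounds separately: the upper bound (Lemma~\ref{lemma:variance_ub}) goes through the Jordan form of $\Pi_G$ and the identity~\eqref{eq:matrix_power_sum} to obtain the exact limit~\eqref{eq:exact_normalized_variance}, while the lower bound (Lemma~\ref{lemma:variance_lb}) is a hands-on exposure argument built on Lemma~\ref{lemma:two_cycles_variance}, which produces two equal-length cycles through a common edge with different geometric mean out-degree and then injects $\Omega(\ell)$ independent coin-flips between them via a Chernoff bound. The authors explicitly remark (just before the proof of Lemma~\ref{lemma:variance_lb}) that they did not see how to show the limit in~\eqref{eq:exact_normalized_variance} is strictly positive when $\rho>\Lambda$, and so resorted to this cruder technique.

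Your Poisson-equation/martingale decomposition is the standard additive-functional machinery and handles both bounds at once, and your contradiction argument for $\sigma_h^2>0$ is exactly the missing link the authors could not find: if $\sigma_h^2=0$ then $h$ is constant on each successor set, whence $\tilde f(e)=h(e)-h(f)$ for every transition $e\to f$, and summing around any non-backtracking cycle telescopes to the cycle condition~\eqref{eq:cyclecond}, forcing $\rho=\Lambda$ by Theorem~\ref{theorem:main}. This is correct and conceptually tighter than the paper's argument; it also connects directly to the diagonal-similarity relation~\eqref{eq:P_diagsim_B} in Lemma~\ref{lemma:rho_eq_Lambda_implies_cycle} (your $h$ plays the role of $\log D_{e,e}$ there). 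What the paper's approach buys in exchange is a concrete combinatorial picture of \emph{where} the variance comes from (two distinguishable cycles) and an explicit closed form~\eqref{eq:exact_normalized_variance} for the asymptotic variance, which they go on to evaluate for $K_4$ minus an edge.
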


We split the proof into several lemmas. In Lemma~\ref{lemma:variance_if_rho_eq_Lambda} we handle the case $\rho = \Lambda$. 
In Lemma~\ref{lemma:two_cycles_variance} we prove that  if $\rho > \Lambda$ then there are two equal length cycles that have different geometric average out-degree.
In Lemma~\ref{lemma:variance_lb} we prove the variance lower bound when $\rho > \Lambda$,
and in Lemma~\ref{lemma:variance_ub} we prove the variance upper bound when $\rho > \Lambda$.

\begin{lemma}\label{lemma:variance_if_rho_eq_Lambda}
    If $G$ is an NB-irreducible graph with $\rho = \Lambda$ then there is a constant $c=c(G)$ so that $\var[R_\ell] \leq c$ for all $\ell$.
\end{lemma}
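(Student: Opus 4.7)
The plan is to strengthen the calculation in the proof of Lemma~\ref{lemma:suspended_implies_rho_eq_Lambda} from an asymptotic statement into a pointwise bound on $R_\ell$. Since $\rho(G) = \Lambda(G)$, Theorem~\ref{theorem:main} guarantees that $G$ satisfies the suspended path condition, so equation~\eqref{eq:outdegree_prod_approx} applies. Crucially, that identity holds for every individual walk $\omega = (e_0,\ldots,e_\ell) \in \Omega_\ell$ that contains at least one edge with $\outdeg(e_i) > 1$, not just asymptotically. I would begin by explicitly invoking this decomposition $\omega = P_0 P_1 \cdots P_s$ and writing
$$f(\omega) = \prod_{i=0}^{\ell-1}\outdeg(e_i) = \outdeg(P_0)^{1/2}\,\indeg(P_s)^{1/2}\cdot \Lambda(G)^{\ell - |P_0| - |P_s|}.$$

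Next, I would take $\log_2$ of both sides and subtract $\ell \log_2 \Lambda$, obtaining
$$R_\ell(\omega) - \ell \log_2 \Lambda = \tfrac{1}{2}\log_2 \outdeg(P_0) + \tfrac{1}{2}\log_2 \indeg(P_s) - (|P_0|+|P_s|)\log_2 \Lambda.$$
The right-hand side depends only on the initial and terminal fragments of $\omega$, and is bounded in absolute value by a constant $C = C(G)$, because $|P_0|$ and $|P_s|$ cannot exceed the maximum suspended-path length $L$ of $G$, while $\outdeg(P_0), \indeg(P_s) \leq \maxdeg(G)-1$. The degenerate case of a walk with no branching edge is immediate: such a walk lies entirely in a single suspended path, so $\ell \leq L$ and $R_\ell$ is bounded by a constant, forcing $|R_\ell - \ell \log_2 \Lambda|$ to be bounded (enlarging $C$ if needed).

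Combining this with the first half of Corollary~\ref{cor:Rl}, which states $\E[R_\ell] = \ell \log_2 \Lambda$, one concludes that $|R_\ell(\omega) - \E[R_\ell]| \leq C$ holds pointwise for every $\omega$, and therefore $\var[R_\ell] \leq C^2$. The main technical point to verify is simply that the decomposition of equation~\eqref{eq:outdegree_prod_approx} is genuinely a pointwise identity with explicit bounds on the boundary factors, which is transparent from the proof of Lemma~\ref{lemma:suspended_implies_rho_eq_Lambda}; there is no real obstacle, and the argument reduces the variance bound to a uniform $L^\infty$ bound on $R_\ell - \E[R_\ell]$.
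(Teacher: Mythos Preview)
Your proposal is correct and follows essentially the same approach as the paper: both invoke Theorem~\ref{theorem:main} to obtain the suspended path condition, apply the pointwise identity~\eqref{eq:outdegree_prod_approx} to deduce that $|R_\ell(\omega)-\ell\log_2\Lambda|$ is bounded uniformly in $\omega$ and $\ell$, and then use $\E[R_\ell]=\ell\log_2\Lambda$ to conclude. Your write-up is somewhat more explicit than the paper's (you spell out the bound on the boundary terms via $L$ and $\maxdeg(G)-1$, and you separately dispose of the degenerate case $s=0$), but the argument is the same.
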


\begin{proof}
    Since $\rho=\Lambda$, Theorem~\ref{theorem:main} implies that the suspended path condition holds.
    Therefore, given an NB-path $\omega = (e_0,\ldots,e_\ell)\in \Omega_\ell$, by equation~\eqref{eq:outdegree_prod_approx} in the proof of Lemma~\ref{lemma:suspended_implies_rho_eq_Lambda}: 
    \begin{equation}
          \left\vert R_\ell(\omega) - \ell \cdot \log_2 \Lambda \right\vert
        = \left\vert \sum_{i=0}^{\ell-1} \log_2 \outdeg(e_i)  - \ell \cdot \log_2 \Lambda \right\vert
        \leq O(1). 
    \end{equation}
    Since $\E[R_\ell] = \ell \cdot \log_2 \Lambda$, this implies that $\var[R_\ell] = O(1)$, as claimed.    
\end{proof}

\begin{lemma}\label{lemma:two_cycles_variance}
    Given an NB-irreducible graph $G$ with $\rho > \Lambda$, there exist two non-backtracking cycles $C_1$ and $C_2$ with $|C_1| = |C_2|$, 
    a common edge in $C_1 \cap C_2$, but different average out-degree:
    \begin{equation*}
        \left[ \prod_{e \in C_1} \outdeg(e) \right]^{\dfrac{1}{|C_1|}} < \left[ \prod_{e \in C_2} \outdeg(e) \right]^{\dfrac{1}{|C_2|}}.
    \end{equation*}
\end{lemma}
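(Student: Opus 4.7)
The plan is first to use Theorem~\ref{theorem:main} to produce a non-backtracking cycle violating the cycle condition, next to show that through any fixed edge $e^*$ there are two non-backtracking closed walks with different geometric-mean out-degree, and finally to raise each to a suitable power so that the two walks have the same length while still sharing $e^*$ and having different averages.

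Since $\rho(G) > \Lambda(G)$, Theorem~\ref{theorem:main} says that the cycle condition fails, so there is a non-backtracking cycle $C^*$ with $\bar\alpha(C^*) \neq \log \Lambda$, where I write $\bar\alpha(C) = \frac{1}{|C|} \sum_{e \in C} \log \outdeg(e)$ for the log of the geometric mean of out-degrees along a non-backtracking closed walk $C$.

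Fix an arbitrary $e^* \in \dirE$; I claim there must exist two non-backtracking closed walks through $e^*$ with different $\bar\alpha$. Suppose otherwise: all such walks share a single value $\alpha$. For any non-backtracking cycle $C$ in $G$, the strong connectedness of the directed-edge graph---equivalent to NB-irreducibility by Claim~\ref{claim:glover2021non}---lets me pick non-backtracking connector walks $Q$ from $e^*$ and $Q'$ back to $e^*$ so that $W_k := Q \cdot C^k \cdot Q'$ is a non-backtracking closed walk through $e^*$ for every $k \geq 1$. A direct calculation shows $\bar\alpha(W_k) \to \bar\alpha(C)$ as $k \to \infty$, so by the standing assumption $\alpha = \bar\alpha(C)$ for every such $C$. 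Applying Lemma~\ref{lemma:find_an_improving_cycle} with $\mu = \nu_s$ and $f = \log \outdeg$ produces a cycle with $\bar\alpha \geq \log \Lambda$, while applying it with $-f$ produces one with $\bar\alpha \leq \log \Lambda$; together these force $\alpha = \log \Lambda$, contradicting $\bar\alpha(C^*) = \alpha \neq \log \Lambda$.

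Having obtained two non-backtracking closed walks $\tilde C_1, \tilde C_2$ through $e^*$ with $\bar\alpha(\tilde C_1) \neq \bar\alpha(\tilde C_2)$, I set $C_i := \tilde C_i^{|\tilde C_{3-i}|}$ for $i=1,2$. Both are non-backtracking closed walks of common length $|\tilde C_1| \cdot |\tilde C_2|$, both contain $e^*$, and $\bar\alpha(C_i) = \bar\alpha(\tilde C_i)$, so their averages remain distinct; relabeling if needed so that $\bar\alpha(C_1) < \bar\alpha(C_2)$ gives the desired pair. The main delicacy is checking non-backtracking at the junctions between $Q$, $C^k$, and $Q'$ and at the wrap-around back to $e^*$, but each of these reduces to picking an appropriate in- or out-neighbor of the relevant edge in the directed-edge graph, which is always possible by the strong connectedness guaranteed by NB-irreducibility.
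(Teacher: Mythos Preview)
Your proof is correct. Both yours and the paper's share the same overall skeleton: invoke Theorem~\ref{theorem:main} to obtain a cycle violating the cycle condition, produce two closed non-backtracking walks through a common directed edge with different geometric-mean out-degree, and then repeat each an appropriate number of times to equalize lengths.

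Where the two arguments diverge is in the middle step. The paper takes the common edge $f$ to lie on the violating cycle $C$ itself and constructs the second cycle $C'$ \emph{directly}: since $\E[R_\ell]=\ell\log_2\Lambda$, some particular length-$\ell$ walk $\omega$ satisfies $R_\ell(\omega)\geq\ell\log_2\Lambda$, and one closes $\omega$ into a cycle through $f$ using bounded-length connector paths supplied by NB-irreducibility; for $\ell$ large enough the resulting average strictly exceeds that of $C$. You instead fix an \emph{arbitrary} $e^*$ and argue by contradiction: if every closed walk through $e^*$ had the same average $\alpha$, the limit $\bar\alpha(W_k)\to\bar\alpha(C)$ would force every non-backtracking cycle to have average $\alpha$, and then Lemma~\ref{lemma:find_an_improving_cycle} applied to $f=\log\outdeg$ and to $-f$ pins down $\alpha=\log\Lambda$, contradicting the violating cycle $C^*$. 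The paper's route is a shade more constructive and avoids the extra appeal to Lemma~\ref{lemma:find_an_improving_cycle}; yours is a clean existence argument that highlights more transparently that failure of the cycle condition is precisely what obstructs a uniform value of $\bar\alpha$ across all cycles. Your closing remark about the junction and wrap-around checks is adequate: each such check is a single non-backtracking transition in the directed-edge graph, and Claim~\ref{claim:glover2021non} guarantees one can always arrange this.
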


\begin{proof}
    Since $\rho>\Lambda$, Theorem~\ref{theorem:main} implies that the cycle condition does not hold.
    Therefore, there is a cycle $C$ with $\prod_{e \in C} \outdeg(e) \neq \Lambda(G)^{|C|}$.
    We only prove the claim for the case
    \begin{equation}
        \frac{1}{|C|} \sum_{e \in C} \log_2 \outdeg(e) < \log_2 \Lambda(G),    
    \end{equation}
    as proof for the other case is virtually identical.
    Let $f$ be some edge in $C$.
    Then, the first step is to find a second cycle $C'$ with a strictly larger average log out-degree such that $f \in C'$.
    Indeed, since $\E[R_\ell] = \ell \log_2 \Lambda$, then for any $\ell$, there is some specific walk $\omega=(e_0,e_1,\ldots,e_\ell) \in \Omega_\ell$ with $R_\ell(\omega) \geq \ell \log_2 \Lambda$.
    Therefore, since $G$ is NB-irreducible, there are paths $P_1,P_2$ respectively from $f$ to $e_0$ and from $e_\ell$ to $f$, 
    whose length is bounded by some constant $l_0$ that depends only on $G$. 
    Therefore, $C' = P_1 \omega P_2$ is a cycle with $f \in C'$.
    Furthermore, 
    \begin{equation*}
               \frac{1}{|C'|} \sum_{e \in C'} \log_2 \outdeg(e)
        \geq    \frac{1}{\ell + 2l_0} R_\ell(\omega)
        \geq   \frac{\ell}{\ell + 2l_0} \log_2\Lambda.
    \end{equation*}
    By taking a sufficiently large $\ell$, one can ensure that:
    \begin{equation}
        \frac{1}{|C|} \sum_{e \in C} \log_2 \outdeg(e) < \frac{1}{|C'|} \sum_{e \in C'} \log_2 \outdeg(e).    
    \end{equation}
    We end the construction by looping through $C$ and $C'$ the required amount of times to obtain the cycles $C_1$ and $C_2$, whose length is  the LCM of $|C|$ and $|C'|$,
    meeting the requirements of the lemma.
\end{proof}

\begin{lemma}\label{lemma:variance_lb}
    If $G$ is an NB-irreducible graph with $\rho > \Lambda$ then there is a constant $c=c(G)$ so that $\var[R_\ell] \geq c \cdot \ell$ for all $\ell$.
\end{lemma}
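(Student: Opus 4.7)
The plan is to leverage the two equal-length cycles produced by Lemma~\ref{lemma:two_cycles_variance} via a conditional variance argument. Let $C_1, C_2$ be these cycles, set $L = |C_1| = |C_2|$, let $f \in C_1 \cap C_2$ be the shared edge, and let $\delta > 0$ be the gap between the average log-outdegrees of $C_2$ and $C_1$. The heuristic is that whenever the walk sits at $f$ at the start of a length-$L$ block, it has positive probability of traversing $C_1$ and positive probability of traversing $C_2$, and these two choices return the walk to $f$ while contributing amounts to $R_\ell$ that differ by $L\delta$. Since the stationary probability of being at $f$ is $1/|\dirE| > 0$, a linear number of blocks should behave this way, injecting $\Omega(\ell)$ variance into $R_\ell$.

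To formalize this, I would split the walk into $\lfloor \ell/L \rfloor$ consecutive blocks of length $L$, and let $\mathcal{F}$ denote the $\sigma$-algebra generated by the states $X_0, X_L, X_{2L}, \ldots$ at the block boundaries. The law of total variance gives $\var[R_\ell] \geq \E[\var[R_\ell \mid \mathcal{F}]]$. The Markov property implies that the internal trajectories of distinct blocks are conditionally independent given $\mathcal{F}$, so
\begin{equation*}
    \var[R_\ell \mid \mathcal{F}] \geq \sum_k \var[B_k \mid X_{kL}, X_{(k+1)L}],
\end{equation*}
where $B_k = \sum_{i=kL}^{(k+1)L-1} \log_2 \outdeg(e_i)$ is the contribution of the $k$-th block (a bounded boundary term at the end being absorbed into additive constants).

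The next step is to lower bound each summand by a positive constant independent of $k$. I would restrict attention to the event $\{X_{kL} = X_{(k+1)L} = f\}$, which has probability $\nu_s(f)\cdot(\Pi_G^L)_{f,f} > 0$, since $(\Pi_G^L)_{f,f}$ is positive because the walk can follow $C_1$ from $f$ back to $f$ in $L$ steps. Conditioned on this event, the walk's intermediate trajectory is a length-$L$ non-backtracking cycle through $f$, and both $C_1$ and $C_2$ receive positive conditional probability while producing $B_k$ values differing by $L\delta$. This yields a uniform positive lower bound on $\E[\var[B_k \mid X_{kL}, X_{(k+1)L}]]$, and summing over the $\Omega(\ell/L)$ blocks delivers $\var[R_\ell] \geq c\ell$ for some $c = c(G) > 0$, as required.

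The main technical subtlety I anticipate lies in the conditional independence step — verifying that, once the boundary states at times $kL$ are fixed, the intermediate trajectories $(X_{kL+1}, \ldots, X_{(k+1)L-1})$ are mutually independent across $k$. This is a direct application of the Markov property for the NBRW, but must be stated with care. The positivity of $(\Pi_G^L)_{f,f}$ and of the conditional weights assigned to $C_1$ and $C_2$ follow immediately from NB-irreducibility together with the concrete structure of the two cycles, so these pose no real difficulty.
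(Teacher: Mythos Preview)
Your argument is correct and follows the same overall strategy as the paper: invoke Lemma~\ref{lemma:two_cycles_variance} to obtain equal-length cycles $C_1,C_2$ sharing an edge $f$, condition on a sparse set of states along the walk, use conditional independence of the resulting segments, and harvest positive variance from each segment whose endpoints are both $f$. The paper, however, implements the conditioning via an adaptive ``exposure'' scheme: it reveals $e_i$, jumps forward by the NB-distance $\phi(e_i)$ from $e_i$ to $f$, and repeats. It then argues, via a binomial domination and a Chernoff bound, that $\Omega(\ell)$ consecutive exposed pairs land at $(f,f)$. Your fixed-block decomposition at times $0,L,2L,\ldots$ sidesteps all of this: stationarity gives $\Pr[X_{kL}=f,\,X_{(k+1)L}=f]=\nu_s(f)\,(\Pi_G^L)_{f,f}>0$ directly, so the expected number of good blocks is linear in $\ell$ without any tail bound. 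This is a genuine simplification; the adaptive scheme buys nothing that stationarity does not already provide, and your version avoids the somewhat informal binomial-domination step in the paper. The only point to tighten when you write it out is the variance lower bound on the good event: conditioned on $X_{kL}=X_{(k+1)L}=f$ there may be many length-$L$ $f$-to-$f$ walks besides $C_1,C_2$, so you should note that if $B_k$ takes values $b_1\neq b_2$ with conditional probabilities at least $p_1,p_2>0$, then $\var[B_k\mid\cdot]\geq \tfrac{1}{2}\min(p_1,p_2)(b_1-b_2)^2$, which gives the required uniform positive constant.
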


\begin{remark}
    In the proof of Lemma~\ref{lemma:variance_ub} we derive an exact formula~\eqref{eq:exact_normalized_variance} for $\lim_{\ell\to\infty}\var[R_\ell]/\ell$.
    As it is unclear how to directly prove this limit is non-zero if $\rho>\Lambda$, we resort to cruder techniques in the following proof.
\end{remark}

\begin{proof}[Proof of Lemma~\ref{lemma:variance_lb}]
    Given an NB-irreducible graph $G$ with $\rho > \Lambda$,
    by Lemma~\ref{lemma:two_cycles_variance} 
    there are two cycles $C_1$ and $C_2$ of the same length $\ell_0=|C_1| = |C_2|$, 
    with different average out-degree, and with some edge $f$ in their intersection.
    Recall that:
    \begin{equation*}
        \var[R_\ell] = \E_{\omega \in \Omega_\ell}\left[ {(R_\ell(\omega) - \ell \log_2\Lambda)}^2\right].
    \end{equation*}
    In order to lower-bound the variance, we use the exposure method. 
    We expose some of the edges of $\omega = (e_0,e_1,\ldots,e_\ell)$
    and argue that the conditional expectation of ${(R_\ell(\omega) - \ell \log_2\Lambda)}^2$ given the exposed edges is $\Omega(\ell)$ with high probability.
    Define the mapping $\phi:\dirE\to\N$, where $\phi(e)$ is the length of the shortest NB-path from $e$ to $f$ if $e \neq f$ and $\phi(f) = \ell_0$.
    Also, let $\ell_{\max}$ be the maximum of $\phi(e)$ on $e \in \dirE$.
    We build the partial exposure mapping $\chi$ from $\{0,\ldots,\ell\}$ to $\dirE$ by the following randomized procedure:
    
    \begin{center}
        \fbox{\begin{minipage}{7 cm}
        \begin{algorithmic}
            \State $\chi \gets $ the empty mapping
            \State $i \gets 0$
            \While{$i < \ell$}
                \State Expose $e_i$ and set $\chi(i) \gets e_i$
                \State $i \gets i + \phi(e_i)$
            \EndWhile
            \State Expose $e_\ell$ and set $\chi(\ell) \gets e_\ell$
        \end{algorithmic}
        \end{minipage}}        
    \end{center}
    
    The domain of $\chi$ is the set of exposed edge indices, $i_1=0 < i_2 < \cdots < i_k=\ell$.
    Therefore, conditioned on $\chi$, one may write $R_\ell$ as the sum of independent random variables:
    \begin{equation*}
        R_\ell = \sum_{i=0}^{\ell-1} \log_2 \outdeg(e_i) = \sum_{j=1}^{k-1} R(i_{j+1}-i_j,e_{i_j},e_{i_{j+1}}),
    \end{equation*}
    where $R(\ell',e',e'')$ is the random variable counting the number of random bits used by a length $\ell'$ NBRW, conditioned on it beginning at $e'$ and ending at $e''$.
    Therefore:
    \begin{equation*}
        \var[R_\ell | \chi]
           = \sum_{j=1}^{k-1} \var\!\left[R(i_{j+1}-i_j,e_{i_j},e_{i_{j+1}})\right]
           \geq |I| \cdot \var\!\left[R(\ell_0,f,f)\right],
    \end{equation*}
    where $I = I(\chi) = \{i \in  \dom(\chi) :\, i+\ell_0 \in \dom(\chi) \mbox{ and } \chi(i) = \chi(i+\ell_0) = f\}$.
    As the number of exposed edges is lower bounded by $\ell/\ell_{\max} = \Omega(\ell)$,
    it follows that $|I|$ is lower bounded by a binomial random variable $X \sim B(\ell/(2\ell_{\max}),p_{\min})$, 
    where $p_{\min}$ is a positive constant defined as the minimum of $(\Pi^{\phi(e)})_{e,f} \cdot (\Pi^{\ell_0})_{f,f}$ on $e \in \dirE$. 
    Therefore, $\E[|I|] \geq \E[X] = c_1 \ell$, where $c_1 = p_{\min}/(2 \ell_{\max})$.
    The Chernoff bound on the lower tail of $X$ yields $\Pr[|I| \leq c_1 \ell/2] \leq \Pr[X \leq c_1 \ell/2] \leq \exp[-c_1 \ell/2]$, which is $o(1)$ as $\ell$ grows.

    Recall that by our assumption on the cycles $C_1, C_2$, the variance of $R(\ell_0,f,f)$ is some positive constant $c_2$.
    Therefore:
    \begin{eqnarray*}
        \var[R_\ell]
        &=& \E\!\left[{(R_\ell(\omega) - \ell \log_2\Lambda)}^2\right]
        = \sum_\chi \E\!\left[{(R_\ell(\omega) - \ell \log_2\Lambda)}^2 \Big\vert \chi\right] \cdot \Pr[\chi] \\
        &\geq& \sum_\chi |I(\chi)| \cdot \var\!\left[R(\ell_0,f,f)\right] \cdot \Pr[\chi] \\
        &\geq& \frac{c_1 \ell}{2} \cdot \var\!\left[R(\ell_0,f,f)\right] \cdot \Pr\!\left[|I| \geq \frac{c_1 \ell}{2}\right] \\
        &\geq& \frac{c_1 c_2 \ell}{2} \cdot \Pr\!\left[|I| \geq \frac{c_1 \ell}{2}\right]
        = \Omega(\ell).
    \end{eqnarray*}
\end{proof}

\begin{lemma}\label{lemma:variance_ub}
    If $G$ is an NB-irreducible graph then there is a constant $c=c(G)$ so that $\var[R_\ell] \leq c \cdot \ell$ for all $\ell$.
\end{lemma}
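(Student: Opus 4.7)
The plan is to use the classical Poisson-equation / martingale decomposition for additive functionals of a finite irreducible Markov chain, which produces a variance bound of $O(\ell)$ for any bounded, $\nu_s$-mean-zero observable and requires no spectral gap or mixing-rate assumption beyond irreducibility itself.

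First I would center. Let $g : \dirE \to \R$ be defined by $g(e) := \log_2 \outdeg(e) - \log_2 \Lambda(G)$; by the definition \eqref{eq:Lambda} of $\Lambda$ we have $\sum_e \nu_s(e) g(e) = 0$, and by Corollary~\ref{cor:Rl}, $S_\ell := R_\ell - \ell \log_2 \Lambda = \sum_{i=0}^{\ell-1} g(e_i)$ with $\var[R_\ell] = \E[S_\ell^2]$. Since $\Pi := \Pi_G$ is irreducible on the finite set $\dirE$ by Claim~\ref{claim:glover2021non}, elementary linear algebra gives that $I-\Pi$ has kernel the constant functions and range $\{f : \sum_e \nu_s(e) f(e) = 0\}$. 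Because $g$ lies in this range, the Poisson equation $(I-\Pi)h = g$ has a solution $h:\dirE\to\R$; finiteness of $\dirE$ automatically makes $h$ bounded, so set $M := \|h\|_\infty$.

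The core step is the martingale decomposition. Writing $g(e_i) = h(e_i) - (\Pi h)(e_i)$ and inserting $\pm h(e_{i+1})$ gives
$$g(e_i) = (h(e_i) - h(e_{i+1})) + \xi_i, \qquad \xi_i := h(e_{i+1}) - \E[h(e_{i+1}) \mid e_i].$$
By the Markov property $\E[h(e_{i+1})\mid e_i]=(\Pi h)(e_i)$, so $(\xi_i)$ is a martingale difference sequence with respect to the natural filtration of the NBRW, and summation telescopes to
$$S_\ell = (h(e_0) - h(e_\ell)) + \sum_{i=0}^{\ell-1}\xi_i.$$
The boundary term is bounded in absolute value by $2M$, while each $|\xi_i| \leq 2M$; pairwise orthogonality of martingale differences together with $(a+b)^2 \leq 2a^2+2b^2$ then yields
$$\var[R_\ell] \leq 8M^2 + 2\sum_{i=0}^{\ell-1}\E[\xi_i^2] \leq 8M^2 + 8M^2\ell,$$
which is $O(\ell)$ with a constant depending only on $G$.

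The same decomposition also produces the exact asymptotic variance promised by the remark preceding Lemma~\ref{lemma:variance_lb}: using $(I-\Pi)h=g$ together with the identity $h^2-(\Pi h)^2 = (h-\Pi h)(h+\Pi h) = g(2h-g)$, one obtains
$$\lim_{\ell\to\infty}\frac{\var[R_\ell]}{\ell} = \E_{\nu_s}[\xi_0^2] = 2\langle g, h \rangle_{\nu_s} - \|g\|_{\nu_s}^2,$$
which can serve as equation~\eqref{eq:exact_normalized_variance}. I do not expect any real obstacle; the only structural input is the irreducibility of $\Pi_G$, guaranteed by NB-irreducibility, and the finiteness of $\dirE$ handles all boundedness issues for free.
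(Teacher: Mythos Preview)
Your argument is correct. The Poisson equation is indeed solvable because irreducibility makes the eigenvalue $1$ of $\Pi_G$ geometrically simple, so by rank--nullity the range of $I-\Pi_G$ has codimension one and coincides with the $\nu_s$-mean-zero functions; the martingale decomposition and the resulting $O(\ell)$ bound are standard and cleanly executed. Your limit formula $2\langle g,h\rangle_{\nu_s}-\|g\|_{\nu_s}^2$ also agrees with the paper's \eqref{eq:exact_normalized_variance}, since $Q^{-1}\bigl[0\oplus\bigoplus_{i\ge 2}(I-J_i)^{-1}\bigr]Q$ applied to a mean-zero vector is exactly a Poisson solution $h$.

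The paper takes a genuinely different route: it expands $\var[R_\ell]$ as a Ces\`aro-weighted sum of covariances $f^T\Pi^\Delta f$, passes to the Jordan normal form of $\Pi_G$, kills the $J_1$ block via orthogonality of $f$ to the Perron eigenvector, and then uses the closed-form identity $\sum_{\Delta=0}^\ell(1-\Delta/\ell)A^\Delta=\tfrac{1}{\ell}(A^{\ell+1}-A)(I-A)^{-2}+(I-A)^{-1}$ to show convergence of $\var[R_\ell]/\ell$. This spectral approach makes the dependence on the Jordan structure explicit and directly yields the block-diagonal form of \eqref{eq:exact_normalized_variance}, but it forces a separate case analysis for eigenvalues of modulus one (invoking that they are algebraically simple). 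Your martingale approach is more elementary in that it needs only the solvability of the Poisson equation and bypasses Jordan forms and unit-circle eigenvalues entirely; what it gives up is the immediate spectral interpretation of the limiting constant.
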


\begin{proof}
    We define $R'_\ell = R_\ell - \ell \log_2\Lambda = \sum_{i=0}^{\ell-1} R'_{\ell,i}$, 
    where $R'_{\ell,i}$ are dependent, zero mean, identically distributed random variables with $R'_{\ell,i}(\omega) = \log_2 \outdeg(e_i) - \log_2\Lambda$.
    Therefore:
    \begin{eqnarray*}
        \var[R_\ell] 
        &=& \var[R'_\ell]
        =   \E\!\left[ {(R'_\ell)}^2\right] 
        =   \sum_{i,j=0}^{\ell-1} \E\!\left[ R'_{\ell,i} R'_{\ell,j}\right] \\
        &=& \ell \cdot \E\!\left[ {(R'_{\ell,0})}^2\right] + 2 \sum_{\Delta=1}^{\ell-1} (\ell-\Delta) \cdot \E\!\left[ R'_{\ell,0} R'_{\ell,\Delta}\right],
    \end{eqnarray*}
    where in the last step we use the fact that the NBRW starting from the stationary distribution 
    $\nu_s$ is time-invariant.

    Let $f$ denote a column vector where $f_e = \log_2 \outdeg(e) - \log_2\Lambda$ and note that $\sum_e f_e = 0$. 
    Recall that $\Pi=\Pi_G$ is the NBRW transition matrix.
    Then:
    \begin{equation}\label{eq:normalized_variance}
        \frac{\var[R_\ell]}{\ell} = \frac{1}{|\dirE|}  \cdot f^T f + \frac{2}{|\dirE|} \sum_{\Delta=1}^{\ell-1} (1-\frac{\Delta}{\ell})  \cdot f^T \Pi^\Delta f.            
    \end{equation}
    Let $J$ be the Jordan normal form of $\Pi$, written as the direct sum of its blocks $J=J_1 \oplus J_2 \ldots \oplus J_k$, 
    where block $J_i$ corresponds to the eigenvalue $\lambda_i$.
    Assume further that $\lambda_1 = 1 \geq |\lambda_2| \geq \cdots \geq |\lambda_k|$ and that $\Pi = Q^{-1} J Q$.
    Then:
    \begin{eqnarray*}
        \frac{\var[R_\ell]}{\ell} 
        &=& - \frac{1}{|\dirE|} \cdot f^T f + \frac{2}{|\dirE|} f^T Q^{-1} \left( \sum_{\Delta=0}^\ell (1-\frac{\Delta}{\ell}) \cdot  J^\Delta  \right) Q f\\
        &=& - \frac{1}{|\dirE|} \cdot f^T f + \frac{2}{|\dirE|} f^T Q^{-1} \left[\bigoplus_{i=1}^k \left( \sum_{\Delta=0}^\ell (1-\frac{\Delta}{\ell}) \cdot  J_i^\Delta  \right) \right] Q f  \\
        &=& - \frac{1}{|\dirE|} \cdot f^T f + \frac{2}{|\dirE|} f^T Q^{-1} \left[0 \oplus \bigoplus_{i=2}^k \left( \sum_{\Delta=0}^\ell (1-\frac{\Delta}{\ell}) \cdot  J_i^\Delta  \right) \right] Q f,  
    \end{eqnarray*}
    where in the last equality  we can replace $J_1$ by the scalar zero. This is justified because the leftmost column of $Q^{-1}$ is a multiple of the all ones vector, orthogonal to $f^T$.
    The next step is to employ the following identity, holding for any matrix $A$ provided that $I-A$ is invertible:
    \begin{equation}\label{eq:matrix_power_sum}
        \sum_{\Delta=0}^\ell (1-\frac{\Delta}{\ell}) \cdot A^\Delta = \frac{1}{\ell} \cdot (A^{\ell+1} - A)(I-A)^{-2} + (I-A)^{-1}.
    \end{equation}
    As $I-J_i$ is invertible for all $i>1$, it follows that the normalized variance can be written as:
    \begin{align}
        \frac{\var[R_\ell]}{\ell} = & -\frac{1}{|\dirE|} \cdot f^T f + \\
        &\frac{2}{|\dirE|} f^T Q^{-1} \left[0 \oplus \bigoplus_{i=2}^k \left( \frac{1}{\ell} \cdot (J_i^{\ell+1} - J_i)(I-J_i)^{-2} + (I-J_i)^{-1} \right) \right] Q f. \nonumber     
    \end{align}
    The matrices $J_i^{\ell+1}$ are upper triangular Toeplitz, 
    where $(J_i^{\ell+1})_{u,v} = \lambda_i^{\ell+1-v+u} \binom{\ell+1}{v-u}$ for $u \leq v$. 
    Therefore, if $|\lambda_i| < 1$, all entries of the matrix $J_i^{\ell+1}$ are bounded in absolute value by some constant that is a function of $|\dirE|$ only.
    Otherwise, if $|\lambda_i|=1$, then necessarily $\lambda_i$ has algebraic multiplicity of one (see~\cite{horn2012matrix}, Corollary~8.4.6), so $J_i$ is a scalar and  $|J_i^{\ell+1}|=1$.
    It follows that the normalized variance converges when $\ell\to\infty$:
    \begin{equation}\label{eq:exact_normalized_variance}
        \lim_{\ell\to\infty} \frac{\var[R_\ell]}{\ell} 
        = - \frac{1}{|\dirE|} \cdot f^T f + \frac{2}{|\dirE|} f^T Q^{-1} \left[0 \oplus \bigoplus_{i=2}^k (I-J_i)^{-1} \right] Q f.
    \end{equation}
    As the right-hand side does not depend on $\ell$ this concludes the proof.
\end{proof}

\subsection{The Case of $K_4$ Minus an Edge}\label{subsect:K4_minus_edge}
    Consider the graph of $K_4$ minus an edge, depicted in Figure~\ref{fig:k4_minus_edge}. 
    The graph has 10 directed edges, where $\outdeg(e)$ is two on six of the edges and one on the remaining four edges,
    implying that the NBRW consumes 0.6 bits per step on average, and that $\Lambda=2^{6/10}\approx 1.5157$. 
    The value of $\rho$ is the Perron eigenvalue of the 10 by 10 non-backtracking adjacency matrix $B$ as stated in Lemma~\ref{lemma:rho_Perron}, 
    which can be reduced to the following 3 by 3 matrix as there are only three edge types, $(u_.,u_.),(v_.,u_.)$ and $(u_.,v_.)$:
    \begin{equation*}
        \rho(B) = \rho\left( \left[
            \begin{tabular}{ccc} 0&0&1\\ 2&0&0\\ 1&1&0 \end{tabular}
        \right] \right) \approx 1.5214
    \end{equation*}
    Therefore, $\rho>\Lambda$ for this graph, implying by Theorem~\ref{theorem:main}, that the suspended path condition as well as the cycle condition are violated.
    Indeed, the suspended path condition is violated as the length one suspended path $P=[(u_1,u_2)]$ has 
    \begin{equation*}
        \left(\outdeg(P) \cdot \indeg(P)\right)^{\frac{1}{2|P|}} = (2\cdot2)^\frac{1}{2} = 2 \neq \Lambda.
    \end{equation*}
    The length three cycle $C=[(u_1,u_2),(u_2,v_1),(v_1,u_1)]$ violates the cycle condition as:
    \begin{equation*}
        \left( \prod_{e \in C} \outdeg(e) \right)^{\frac{1}{|C|}} = (2 \cdot 1 \cdot 2)^{\frac{1}{3}} = 2^{\frac{2}{3}}\neq \Lambda=2^{\frac{3}{5}}.
    \end{equation*}
    
    \begin{figure}[h]
        \centering
        \includegraphics[width=0.25\textwidth]{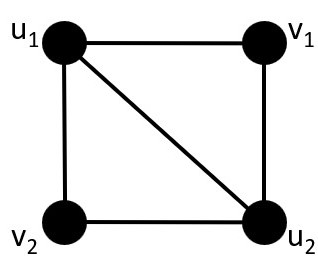}
        \caption{$K_4$ minus an edge.}
        \label{fig:k4_minus_edge}
    \end{figure}
    
    Next, we regard the random variable $R_\ell$ counting the number of random bits used by a length $\ell$ NBRW starting from the stationary distribution on the $K_4$ minus an edge graph, as defined in the beginning of Subsection~\ref{subsect:dichotomy}.
    By Corollary~\ref{cor:Rl}, the expected number of random bits per step is $\E[R_\ell/\ell] = \log_2\Lambda = 0.6$,
    while $\log_2 \rho = \lim_{\ell \rightarrow \infty} \frac 1 \ell \log_2 \E[2^{R_\ell}] \approx 0.605$. 

    This can be demonstrated by an experiment.
    In Figure~\ref{fig:bit_histogram} we depict the PDF of $R_\ell/\ell$ for $K_4$ minus an edge for $\ell=1000$.
    As expected by the Markov chains Central Limit Theorem, for example~\cite{aldous-fill-2014} chapter 2,
    the PDF of $R_\ell/\ell$ can be approximated by a normal distribution with mean $\log_2\Lambda = 0.6$.
    The observed normalized variance $\var[R_\ell]/\ell$ is very close to $2/125=0.016$, in agreement with the exact limit computed in Lemma~\ref{lemma:k4_minus_edge_variance} below.
    By Theorem~\ref{theorem:variance_dichotomy}, $\var[R_\ell]=\Theta(\ell)$, implying that as $\ell$ grows, $\var[R_\ell/\ell]$ goes down as $1/\ell$. Therefore, the limit of $\frac {1}{\ell}  \log_2 \E\!\left[2^{R_\ell}\right]$ is governed by the extreme upper tail of the PDF of $R_\ell/\ell$.
    \begin{figure}[h]
        \centering
        \includegraphics[width=1.0\textwidth,clip,trim=1mm 1mm 1mm 1mm]{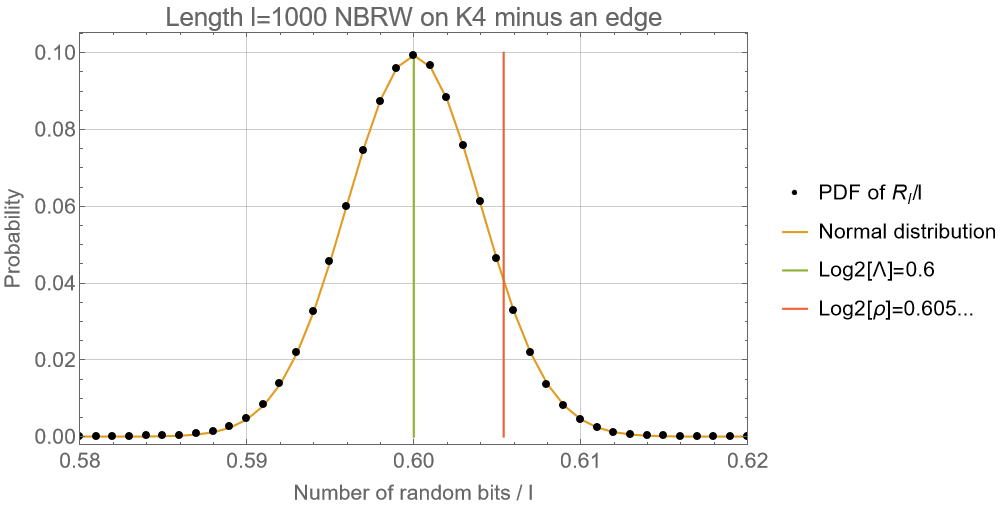}
        \caption{
            The number of bits per step, $R_\ell/\ell$, for the length $\ell=1000$ NBRW on $K_4$ minus an edge. 
            The black dots are the exact PDF, the yellow line is a best-fit normal distribution, 
            the green line is at $\log_2\Lambda$ 
            and the orange line is at $\log_2\rho$ which is the limit on $\ell$ of $\frac {1}{\ell} \log_2 \E\!\left[2^{R_\ell}\right]$.
        }
        \label{fig:bit_histogram}
    \end{figure}

    We end with an exact computation of the asymptotic normalized variance of the number of bits consumed by the NBRW on $K_4$ minus an edge, which is a specialization of the computation in Lemma~\ref{lemma:variance_ub} to the specific graph.
    \begin{lemma}\label{lemma:k4_minus_edge_variance}
        Given the probability space $\Omega_\ell$ of the length $\ell$ NBRW on $K_4$ minus an edge, starting from the stationary distribution, the random variable  
        $R_\ell(\omega) = \sum_{i=0}^{\ell-1} \log_2 \outdeg(e_i)$, for $\omega=(e_0,\ldots,e_\ell) \in \Omega_\ell$ satisfies:
        \begin{equation}
            \lim_{l\to\infty} \frac{\var[R_\ell]}{\ell} = \frac{2}{125}
        \end{equation}
    \end{lemma}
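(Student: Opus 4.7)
The plan is to specialize the exact limit formula~\eqref{eq:exact_normalized_variance} derived in the proof of Lemma~\ref{lemma:variance_ub} to this particular graph, exploiting its symmetry to reduce from a $10 \times 10$ computation to a $3 \times 3$ one. The automorphism group of $K_4$ minus an edge partitions $\dirE$ into three orbits: the two inner edges $A = (u_\cdot, u_\cdot)$, the four outgoing edges $B = (u_\cdot, v_\cdot)$, and the four incoming edges $C = (v_\cdot, u_\cdot)$. The function $f_e = \log_2 \outdeg(e) - \log_2 \Lambda$ is constant on each orbit, taking the values $2/5,\ -3/5,\ 2/5$ respectively (recall $\log_2 \Lambda = 3/5$). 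Because $f$ is orbit-invariant and the stationary distribution $\nu_s$ is uniform on $\dirE$, every quadratic form $f^T \Pi^\Delta f$ appearing in~\eqref{eq:exact_normalized_variance} can be computed inside the reduced NBRW on types, whose transition matrix
\begin{equation*}
    \Pi_{\mathrm{red}} = \begin{pmatrix} 0 & 1 & 0 \\ 0 & 0 & 1 \\ 1/2 & 1/2 & 0 \end{pmatrix}
\end{equation*}
is read off directly from the graph, with stationary distribution $\nu_s^{\mathrm{red}} = (2/10,\, 4/10,\, 4/10)$ given by the orbit sizes.

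Next, I would observe that $\nu_s f = 0$, so the Perron block in~\eqref{eq:exact_normalized_variance} contributes nothing and the formula collapses to
\begin{equation*}
    \lim_{\ell \to \infty} \frac{\var[R_\ell]}{\ell} = -\E_{\nu_s}\!\left[f^2\right] + 2\,\E_{\nu_s}\!\left[f \cdot g\right],
\end{equation*}
where $g$ is the unique solution of the $3 \times 3$ system $(I - \Pi_{\mathrm{red}})\, g = f$ subject to $\nu_s^{\mathrm{red}}\, g = 0$. The first two rows of the linear system immediately give $g_A = g_B + 2/5$ and $g_C = g_B + 3/5$, and the mean-zero constraint pins down the third coordinate; solving yields a rational vector with small denominators that feeds back cleanly into the weighted inner product $\E_{\nu_s}[f \cdot g]$.

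Substituting the resulting $g$ and the easily-computed value of $\E_{\nu_s}[f^2]$ into the displayed formula produces the final answer $2/125$. The work is essentially free of conceptual difficulty once the symmetry reduction is in place; the main obstacle is simply bookkeeping, namely verifying that reducing to orbits is compatible with the quadratic forms in~\eqref{eq:exact_normalized_variance} (which holds because $f$ is constant on orbits, $\nu_s$ is uniform on $\dirE$, and the orbit decomposition is preserved by $\Pi$) and avoiding arithmetic slips when assembling the fraction. An alternative, purely algebraic route would diagonalize $\Pi_{\mathrm{red}}$ via its characteristic polynomial $\lambda^3 - \lambda/2 - 1/2 = (\lambda - 1)(\lambda^2 + \lambda + 1/2)$, apply~\eqref{eq:exact_normalized_variance} block-by-block, and arrive at the same value.
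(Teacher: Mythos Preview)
Your proposal is correct and follows essentially the same strategy as the paper: reduce by the three-orbit symmetry and evaluate the limit formula~\eqref{eq:exact_normalized_variance} on the resulting $3\times 3$ chain. The only difference is computational: the paper diagonalizes $\Pi_{\mathrm{red}}$ explicitly over $\mathbb{C}$ and evaluates $f^T N_s Q^{-1}(I-E)^{-1}Qf$, whereas you bypass diagonalization by solving the Poisson equation $(I-\Pi_{\mathrm{red}})g=f$ with $\nu_s^{\mathrm{red}}g=0$ (yielding $g=(2/25,-8/25,7/25)$), which is a slightly cleaner route to the same number.
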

    
\begin{proof}
    As in Lemma~\ref{lemma:variance_ub}, we define $R'_\ell = \sum_{i=0}^{\ell-1} R'_{\ell,i}$, 
    where $R'_{\ell,i}$ are random variables with $R'_{\ell,i}(\omega) = \log_2 \outdeg(e_i) - \log_2\Lambda$.
    Therefore:
    \begin{equation*}
        \frac{\var[R_\ell]}{\ell} = - \E\!\left[ {(R'_{\ell,0})}^2\right] + 2 \sum_{\Delta=0}^{\ell} (1-\frac{\Delta}{\ell}) \cdot \E\!\left[ R'_{\ell,0} R'_{\ell,\Delta}\right].
    \end{equation*}

    In order to avoid 10x10 matrices, we exploit the fact that $K_4$ minus an edge has only three edge types.
    Then, the stationary distribution, the NBRW transition matrix, and the vector $f$ with $f_e = \log_2 \outdeg(e) - \log_2\Lambda$, 
    with row and column order of: $uu$, $vu$ and $uv$, are as follows:
    \begin{equation*}
        \nu_s = \begin{pmatrix} 1/5 \\ 2/5 \\ 2/5 \end{pmatrix}, \,
        \Pi = \begin{pmatrix} 0 & 0 & 1 \\ 1/2 & 0 & 1/2 \\ 0 & 1 & 0 \end{pmatrix}, \,
        f = \begin{pmatrix} 2/5 \\ 2/5 \\ -3/5\end{pmatrix}.
    \end{equation*}
    Using $N_s$ to denote the matrix with diagonal $\nu_s$ we write:
    \begin{equation}
        \frac{\var[R_\ell]}{\ell} = - f^T N_s f + 2 \sum_{\Delta=0}^{\ell} (1-\frac{\Delta}{\ell})  \cdot f^T N_s \Pi^\Delta f.
    \end{equation}
    We next observe that the matrix $\Pi$ is diagonalizable, $\Pi = Q^{-1} D Q$, with:
    \begin{equation*}
        Q = \begin{pmatrix} \frac{1}{2} & 1 & 1 \\ i & -1-i & 1 \\ -i & -1+i & 1 \end{pmatrix}, \,
        D = \begin{pmatrix} 1 & 0 & 0 \\ 0 & -\frac{1}{2}+\frac{i}{2} & 0 \\ 0 & 0 & -\frac{1}{2}-\frac{i}{2} \end{pmatrix},
    \end{equation*}
    where the first row of $Q$ is the left Perron eigenvector, equal to $\nu_s$ up to scaling.
    In addition, the right Perron eigenvector, which is the first column of $Q^{-1}$, is necessarily the all one vector up to scaling.
    Therefore, $(f^T N_s Q^{-1})_1 = 0$, implying that $f^T N_s Q^{-1} D = f^T N_s Q^{-1} E$, where $E$ is the matrix obtained by zeroing the first diagonal entry of $D$.
    Plugging this into the variance expression yields:
    \begin{eqnarray*}
        \frac{\var[R_\ell]}{\ell} 
        &=& - f^T N_s f + 2 \sum_{\Delta=0}^{\ell} (1-\frac{\Delta}{\ell}) \cdot f^T N_s Q^{-1} D^\Delta Q f \\
        &=& - f^T N_s f + 2 f^T N_s Q^{-1} \left( \sum_{\Delta=0}^\ell (1-\frac{\Delta}{\ell}) \cdot E^\Delta \right) Q f.
    \end{eqnarray*}
    Using~\eqref{eq:matrix_power_sum}, the normalized variance expression converges to:
    \begin{eqnarray*}
        \lim_{\ell\to\infty} \frac{\var[R_\ell]}{\ell} 
        &=& -f^T N_s f + 2 f^T N_s Q^{-1} \left( \lim_{\ell\to\infty} \sum_{\Delta=0}^\ell \frac{\ell-\Delta}{\ell} \cdot E^\Delta \right) Q f \\
        &=& -f^T N_s f + 2 f^T N_s Q^{-1} (I-E)^{-1} Q f.
    \end{eqnarray*}
    Plugging in the numbers, yields $2/125$ on the right hand side, as claimed.
\end{proof}
    
\section{Observations and Open Questions}
\begin{enumerate}
    \item
        Is there a quantitative version of our main result of Theorem~\ref{theorem:main}? 
        Something along the lines of: {\em a graph is $\epsilon$-close to satisfying the suspended path condition if and only if $\rho \leq \Lambda + \delta(\epsilon)$}. It seems relatively easy to prove that being close to satisfying the suspended path condition implies that $\rho$ is close to $\Lambda$. However, the other direction is not obvious, if true at all.
    \item
        What is the minimal possible value $\rho$ may have, given the degree distribution.
        (The maximal value of $\rho$ seems to be as close as we wish to the maximal degree minus one, in the degree distribution support.)
    \item 
        Specifically, what is the minimal possible value of $\rho$, for a graph with half the vertices being of degree two and the other half of degree three. 
        We conjecture that the minimum is $\rho(K_4 \mbox{ minus an edge})$.
    \item 
        Prove that any girth $g$ graph, with half its vertices being of degree two and the other half of degree three, has 
        $g \leq (2+o(1)) \log_\rho n$, where $\rho = \rho(K_4 \mbox{ minus an edge})$.
    \item 
        Can one directly see why violation of the suspended path condition necessary implies that the limit of $\var[R_\ell]/\ell$ is strictly positive in~\eqref{eq:exact_normalized_variance}?
    \item 
        What is the minimal possible value of $\lim_{\ell \to \infty}\var[R_\ell]/\ell$ for a graph $G$ with for a given degree distribution?
        Is the minimum obtained on the same graphs that minimize $\rho$?
    \item
        Is there an  NB-irreducible graph $G$ for which the NBRW transition matrix $\Pi_G$ is not diagonalizable?
\end{enumerate}

\section{Acknowledgement}
    We would like to thank Ofer Zeitouni for numerous fruitful discussions. 
    Much of the proof of Lemma~\ref{lemma:rho_eq_Lambda_implies_cycle} 
    was inspired by his arguments and by the Large Deviations book~\cite{dembo2009large}.
\bibliographystyle{abbrv}
\bibliography{EntrpGrtRtRef}

\end{document}